\numberwithin{equation}{section}
\theoremstyle{plain}
\newtheorem{proposition}{Proposition}[section]
\newtheorem{corollary}[proposition]{Corollary}
\newtheorem{lemma}[proposition]{Lemma}
\newtheorem{theorem}[proposition]{Theorem}
\theoremstyle{definition}
\newtheorem{definition}[proposition]{Definition}
\newtheorem{example}[proposition]{Example}
\newtheorem{remark}[proposition]{Remark}
\DeclareMathOperator{\sgn}{sgn}
\DeclareMathOperator{\diag}{diag}
\DeclareMathOperator{\tr}{tr}
\DeclareMathOperator{\card}{card}
\newcommand{\Z}{\mathbf{Z}}
\newcommand{\abs}[1]{\left\lvert #1 \right\rvert}
\newcommand{\smallbullet}{\,\vcenter{\hbox{\tiny$\bullet$}}\,}
\newcommand{\minor}[1]{\left\lvert #1 \right\rvert}
\newcommand{\separation}[1]{\operatorname{sep}(#1)}
\newcommand{\detterm}[2]{#1_{(#2)}}
\tikzset{
  source/.style={circle,draw=black!100,fill=black!50,inner sep = 0,minimum size=2mm},
  sink/.style={circle,draw=black!100,fill=white,inner sep = 0,minimum size=2mm}
}
\newcommand{\bipartite}[1]{%
  \foreach \n in {1,...,#1} {
    \node (a\n) at (0,\n) [source,label={180:$\n$}] {};
    \node (b\n) at (3,\n) [sink,label={0:$\n$}] {};
  }}
\begin{document}

\title{The Canada Day Theorem}

\author{%
  Daniel Gomez\thanks{Department of Mathematics and Statistics, University of Saskatchewan, 106 Wiggins Road, Saskatoon, Saskatchewan, S7N 5E6, Canada; dag857@mail.usask.ca}
  \and
  Hans Lundmark\thanks{Department of Mathematics, Link{\"o}ping University, SE-581 83 Link{\"o}ping, Sweden; hans.lundmark@liu.se}
  \and
  Jacek Szmigielski\thanks{Department of Mathematics and Statistics, University of Saskatchewan, 106 Wiggins Road, Saskatoon, Saskatchewan, S7N 5E6, Canada; szmigiel@math.usask.ca}
}

\date{June 16, 2012}

\maketitle

\begin{abstract}
  The Canada Day Theorem is an identity involving sums of $k \times k$ minors
  of an arbitrary $n \times n$ symmetric matrix.
  It was discovered as a by-product of the work on so-called peakon solutions of an
  integrable nonlinear partial differential equation proposed by V.~Novikov.
  Here we present another proof of this theorem, which explains the underlying mechanism
  in terms of the orbits of a certain abelian group action
  on the set of all $k$-edge matchings of the complete bipartite graph~$K_{n,n}$.
\end{abstract}

\section{Introduction}

The ``Canada Day Theorem'' refers to the following curious combinatorial fact:
\begin{theorem}\label{thm:CanadaDay}
  Let $T$ be the $n\times n$ matrix with entries
  \begin{equation}
    T_{ij}=1+\sgn(i-j)
    =
    \begin{cases}
      0, & i<j,\\
      1, & i=j,\\
      2, & i>j,
    \end{cases}
  \end{equation}
  and let $X$ be an arbitrary symmetric $n\times n$ matrix.
  Then, for $1 \le k \le n$, the sum of the
  $k \times k$ principal minors of~$TX$ equals the
  sum of all $k\times k$ minors of~$X$ (principal and non-principal).
\end{theorem}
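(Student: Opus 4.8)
The plan is to reduce the theorem, via the Cauchy--Binet formula and the symmetry of $X$, to a combinatorial identity about orientations of certain graphs, and then to prove that identity using an abelian group action on the set of $k$-edge matchings of $K_{n,n}$. Applying Cauchy--Binet to each principal minor, $\det\bigl((TX)_{[I,I]}\bigr)=\sum_{J}\det T_{[I,J]}\,\det X_{[J,I]}$, where $M_{[I,J]}$ denotes the submatrix of $M$ on rows $I$ and columns $J$ and $I,J$ run over $k$-element subsets of $\{1,\dots,n\}$. Summing over $I$ and using $\det X_{[J,I]}=\det X_{[I,J]}$ (because $X$ is symmetric), the sum of principal $k\times k$ minors of $TX$ becomes $\sum_{I,J}\det T_{[I,J]}\,\det X_{[I,J]}$, whereas the sum of all $k\times k$ minors of $X$ is $\sum_{I,J}\det X_{[I,J]}$. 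Hence the theorem is equivalent to the polynomial identity $\sum_{I,J}\bigl(\det T_{[I,J]}-1\bigr)\det X_{[I,J]}=0$ in the entries of the symmetric matrix $X$. This is \emph{not} a term-by-term cancellation: the minors of a symmetric matrix satisfy Plücker-type relations such as $\det X_{[\{1,2\},\{3,4\}]}-\det X_{[\{1,3\},\{2,4\}]}+\det X_{[\{1,4\},\{2,3\}]}=0$, and indeed $\det T_{[I,J]}+\det T_{[J,I]}$ is not always $2$, so a genuinely combinatorial argument is needed.

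Next I would expand everything over matchings. Writing $\det X_{[I,J]}=\sum_{\pi\colon I\to J}\sgn(\pi)\prod_{i\in I}X_{i\pi(i)}$ over bijections $\pi$ identifies the triples $(I,J,\pi)$ with $k$-edge matchings $\nu$ of $K_{n,n}$: its left endpoints form the set $L(\nu)=I$, its right endpoints $R(\nu)=J$, and $\sgn(\nu)=\sgn(\pi)$ is read off from the increasing orderings of $I$ and $J$. The identity above becomes $\sum_{\nu}\sgn(\nu)\bigl(\det T_{[L(\nu),R(\nu)]}-1\bigr)X^{\nu}=0$ with $X^{\nu}=\prod_{(i,j)\in\nu}X_{ij}$. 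Since $X$ is symmetric, $X^{\nu}$ depends only on the underlying undirected multigraph $\widehat{\nu}$ of $\nu$ --- a disjoint union of simple paths, simple cycles and loops on $\{1,\dots,n\}$ with $k$ edges in all --- and distinct multigraphs give distinct, hence linearly independent, monomials. Collecting terms, the theorem reduces to the assertion that for every such multigraph $G$, with $p$ path components and $c$ cycle components,
\[
\sum_{\nu\,:\,\widehat{\nu}=G}\sgn(\nu)\,\det T_{[L(\nu),R(\nu)]}\;=\;\sum_{\nu\,:\,\widehat{\nu}=G}\sgn(\nu),
\]
the sums running over the $2^{p+c}$ orientations $\nu$ of $G$.

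This is where the group enters. On the set of $k$-edge matchings $\nu$ with a given underlying multigraph $G$ the group $(\Z/2)^{p+c}$ acts freely and transitively by reversing individual path and cycle components; this partition of all $k$-edge matchings into orbits is the one referred to in the abstract, and the displayed identity is a statement about a single orbit. The handle on $\det T_{[L,R]}$ is that $T_{ij}$ equals $0$, $1$, or $2$ according as $i<j$, $i=j$, or $i>j$, so $\det T_{[L,R]}=\sum_{\sigma}\sgn(\sigma)\,2^{\#\{\,j\,:\,\sigma(j)<j\,\}}$, summed over bijections $\sigma\colon L\to R$ with $\sigma(j)\le j$ for every $j$; in particular $\det T_{[L,L]}=1$. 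I would first observe that reversing a cycle or loop component changes neither its contribution to $L(\nu)$ nor to $R(\nu)$ --- both equal the vertex set of that component --- so only path components make $L(\nu)$ and $R(\nu)$ differ. What then remains is the orbit identity itself, which I expect to establish by induction on the components of $G$, tracking how the number of edge-crossings (which computes $\sgn(\nu)$) and the minor $\det T_{[L(\nu),R(\nu)]}$ change \emph{together} when one component is reversed, or else by singling out one distinguished orientation of $G$ and pairing the remaining ones by a sign-reversing map that preserves $\det T_{[L,R]}$.

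The hard part is precisely this last step, and the difficulty is that the cancellation inside an orbit is not term-by-term. Already for the path with edges $\{1,2\}$ and $\{2,3\}$ the two orientations give $\det T_{[L,R]}$ equal to $0$ and to $2$ with the \emph{same} sign, so no sign-reversing involution can exist on that orbit and the identity must be verified by direct evaluation; and once $G$ has several components, the values of $\det T_{[L,R]}$ and of the crossing numbers depend on how the components interleave along $\{1,\dots,n\}$ (two disjoint edges $\{1,3\},\{2,4\}$ behave differently from $\{1,2\},\{3,4\}$). Organizing the interplay of $\sgn(\nu)$ and $\det T_{[L(\nu),R(\nu)]}$ under the group action so that each orbit sum visibly collapses to the signed count of orientations is the crux of the matter.
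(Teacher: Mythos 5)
Your reduction --- Cauchy--Binet, using the symmetry of $X$, expanding minors as sums over $k$-edge matchings of $K_{n,n}$, and grouping orientations of a common underlying undirected multigraph $G$ --- is precisely the skeleton of the paper's argument, and the orbit identity you isolate,
\[
\sum_{\widehat{\nu}=G}\sgn(\nu)\,\det T_{[L(\nu),R(\nu)]}
\;=\; \sum_{\widehat{\nu}=G}\sgn(\nu),
\]
is the right reduction target. (A minor point: $(\Z/2)^{p+c}$ does not act freely when $G$ has loops or $2$-cycles, and more importantly reversing a cycle component changes none of $L(\nu)$, $R(\nu)$, $\sgn(\nu)$, so cycle components can be discarded and only the $2^{p}$ orientations of the path components matter.) However, the orbit identity itself is left unproved, and that is where the entire content of the theorem lives; your proposal stops exactly at the point where the work must begin, and the strategies you float (component-wise induction, or a sign-reversing pairing that preserves the $T$-minor) are not carried out and, as your own examples already show, cannot be applied naively.

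The paper closes this gap by decoupling the two hard quantities rather than trying to track $\sgn(\nu)$ and $\det T_{[L(\nu),R(\nu)]}$ jointly. First, Lemma~\ref{thm:TMinors} evaluates the $T$-minor in closed form via the Lindstr\"om--Gessel--Viennot lemma ($T$ is the path matrix of a small planar acyclic digraph): $\det T_{[L,R]}$ equals $2^{\card(L\setminus R)}$ when $R$ and $L$ interlace in the sense $r_1\le \ell_1\le r_2\le\dotsb\le r_k\le\ell_k$, and equals $0$ otherwise. This removes the ``direct evaluation'' obstacle you flag at the path $\{1,2\},\{2,3\}$: at most one orientation in the orbit has a nonzero $T$-minor, and its value is an explicit power of two. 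Second, the sign side $\sum_{\nu}\sgn(\nu)$ is controlled by the \emph{endpoint separation} $\separation{C}$ of each path component (``open cluster'') $C$: the number of matched labels lying strictly between the two endpoints of $C$. Lemma~\ref{lem:sign-when-flipped} shows that reversing $C$ multiplies $\sgn(\nu)$ by $(-1)^{\separation{C}}$, and Lemma~\ref{lem:even-endpoint-separation} shows that the orbit contains an interlacing orientation if and only if every open cluster has even separation; in the all-even case all $2^p$ orientations carry the same sign (Corollary~\ref{cor:interlacing}), while otherwise flipping the odd-separation cluster with smallest minimal label is a sign-reversing involution, so $\sum_\nu\sgn(\nu)=0$ (Corollary~\ref{cor:non-interlacing}). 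Combining the two pieces gives exactly your orbit identity and hence the theorem. Without structural input of this kind --- in particular the closed-form value of $\det T_{[L,R]}$ --- your orbit identity is merely restated, not established.
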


This theorem arose as an unexpected byproduct in our previous paper
\cite{hls} where we studied so-called ``peakon'' (peaked soliton)
solutions to a completely integrable nonlinear partial differential
equation discovered by V.~Novikov \cite{VNovikov}; a brief account of
this can be found in the appendix. The name of the theorem refers to its
``birthday'' (July 1, Canada's national holiday, in 2008).

A $k \times k$ minor in an $n \times n$ matrix $X = (x_{ij})$
is determined by a choice of $k$-element index sets $I$ and~$J$
(row and column indices, respectively)
from the set $[n] = \{ 1,2,\dots,n \}$.
We will use the notation
$\binom{[n]}{k}$ for the set of all $k$-element subsets of $[n]$,
and write minors as
\begin{equation}
  \minor{X_{IJ}}
  = \minor{X_{i_1 \dots i_k, j_1 \dots j_k}}
  = \det
  \begin{pmatrix}
    x_{i_1 j_1} & \dots & x_{i_1 j_k} \\
    \vdots & & \vdots \\
    x_{i_k j_1} & \dots & x_{i_k j_k} \\
  \end{pmatrix}
  ,
\end{equation}
for $I,J \in \binom{[n]}{k}$,
where $i_1 < i_2 < \dots < i_k$ and $j_1 < j_2 < \dots < j_k$ are the elements
of $I$ and~$J$ listed in increasing order.
A \emph{principal minor} is a minor with $I=J$.

The sum of the $k \times k$ principal minors of a matrix~$A$ is of course
very familiar as the coefficient of $(-1)^k z^{n-k}$ in the characteristic
polynomial $\det(zI-A)$.
The sum of \emph{all} $k \times k$ minors is much less frequently encountered;
one rare example in the literature is \cite{Okada},
but the results there do not seem directly related to Theorem~\ref{thm:CanadaDay},
since they deal with arbitrary matrices where symmetry plays no role.

\begin{example}
  In the case $n=3$ we have
  \begin{equation*}
    T =
    \begin{pmatrix}
      1 & 0 & 0 \\
      2 & 1 & 0 \\
      2 & 2 & 1
    \end{pmatrix}
    ,\qquad
    X =
    \begin{pmatrix}
      a & b & c \\
      b & d & e \\
      c & e & f
    \end{pmatrix}
    ,
  \end{equation*}
  and hence
  \begin{equation*}
    TX =
    \begin{pmatrix}
      a & b & c \\
      2a+b & 2b+d & 2c+e \\
      2a+2b+c & 2b+2d+e & 2c+2e+f
    \end{pmatrix}
    .
  \end{equation*}
  The sum of the principal $1 \times 1$ minors of $TX$ is just the trace,
  $(a+2b+2c)+(d+2e)+f$, which indeed equals the sum of all entries of~$X$
  (i.e., the sum of all $1 \times 1$ minors).
  The sum of the principal $2 \times 2$ minors of $TX$ is
  \begin{equation*}
    \begin{vmatrix}
      a & b \\
      2a+b & 2b+d \\
    \end{vmatrix}
    +
    \begin{vmatrix}
      a & c \\
      2a+2b+c & 2c+2e+f
    \end{vmatrix}
    +
    \begin{vmatrix}
      2b+d & 2c+e \\
      2b+2d+e & 2c+2e+f
    \end{vmatrix}
    ,
  \end{equation*}
  which, as can be easily verified, equals the sum of all $2 \times 2$ minors of $X$,
  \begin{equation*}
    \begin{vmatrix}
      a & b \\
      b & d \\
    \end{vmatrix}
    +
    \begin{vmatrix}
      a & c \\
      c & f \\
    \end{vmatrix}
    +
    \begin{vmatrix}
      d & e \\
      e & f \\
    \end{vmatrix}
    +
    \begin{vmatrix}
      a & c \\
      b & e \\
    \end{vmatrix}
    +
    \begin{vmatrix}
      b & d \\
      c & e \\
    \end{vmatrix}
    +
    \begin{vmatrix}
      b & e \\
      c & f \\
    \end{vmatrix}
    +
    \begin{vmatrix}
      a & b \\
      c & e \\
    \end{vmatrix}
    +
    \begin{vmatrix}
      b & c \\
      d & e \\
    \end{vmatrix}
    +
    \begin{vmatrix}
      b & c \\
      e & f \\
    \end{vmatrix}
    .
  \end{equation*}
  And, since $\det T=1$, the single $3 \times 3$ minor of~$TX$
  (which is of course principal)
  equals the single $3 \times 3$ minor of~$X$: $\det(TX) = \det X$.
\end{example}

\begin{example}
  For $n>3$, the cases $k=1$ and $k=n$ of the theorem are still easy to verify:
  for $k=1$ we have
  \begin{equation*}
    \tr(TX) = \tfrac12 \tr(TX) + \tfrac12 \tr((TX)^t)
    = \tr\bigr( \tfrac12 (T+T^t)X \bigr)
    = \sum_{i,j} x_{ij}
  \end{equation*}
  since $\tfrac12 (T+T^t)$ is the matrix with $1$ in every position,
  and for $k=n$ we have $\det(TX) = \det X$ as before.
  However, the intermediate cases $2 \le k \le n-1$ are more involved.
  (The reader might want to check the case $n=4$, $k=2$
  to become convinced of this.)
\end{example}

Of particular importance for the Canada Day Theorem are minors whose
row and column indices are \emph{interlacing}, meaning that
\begin{equation}
  \label{eq:interlacing}
    i_{1} \leq j_{1} \leq i_{2} \leq j_{2} \leq \dotsb \leq i_{k} \leq j_{k}.
\end{equation}
We abbreviate this condition as $I \le J$.
Given sets $I, J \in \binom{[n]}{k}$,
let $I' = I \setminus (I \cap J)$ and $J' = J \setminus (I \cap J)$,
and denote the cardinality of $I'$ and~$J'$ by
\begin{equation}
  \label{eq:p}
  p = p(I,J) = \card(I') = \card(J') = k - \card(I \cap J)
  .
\end{equation}
Note that $I$ and $J$ are interlacing if and only if $I'$ and $J'$
are \emph{strictly interlacing} (abbreviated $I' < J'$),
\begin{equation}
  \label{eq:strictly-interlacing}
    i'_{1} < j'_{1} < i'_{2} < j'_{2} < \dotsb < i'_{p} < j'_{p}.
\end{equation}
With this notation in place, we can state the more precise
version of the Canada Day Theorem that we are actually going to prove;
it says that the two sums are both equal to a third sum involving only
minors with interlacing index sets:
\begin{theorem}
  \label{thm:CDT-common-sum}
  Let $T$ and~$X$ be as in Theorem~\ref{thm:CanadaDay}, and let
  \begin{equation}
    \label{eq:common-sum}
    S = \sum_{\substack{I,J \in \binom{[n]}{k} \\[0.5ex] I \le J}} 2^{p(I,J)} \minor{X_{IJ}}
    .
  \end{equation}
  Then the following holds:
  \begin{enumerate}
  \item[(a)] The sum of the principal $k \times k$ minors of $TX$ equals~$S$.
  \item[(b)] The sum of all $k \times k$ minors of $X$ equals~$S$.
  \end{enumerate}
\end{theorem}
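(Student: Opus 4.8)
The plan is to prove parts (a) and (b) separately, each by exhibiting the relevant sum as $S$; the two halves use rather different tools. Part (a) is a short application of the Cauchy--Binet formula together with an evaluation of the minors of $T$; part (b) is the combinatorial core, and is where the group action on matchings does its work.

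\emph{Part (a).} By Cauchy--Binet, $\minor{(TX)_{KK}} = \sum_{L \in \binom{[n]}{k}} \minor{T_{KL}}\,\minor{X_{LK}}$, so the sum of the principal $k\times k$ minors of $TX$ equals $\sum_{K,L}\minor{T_{KL}}\,\minor{X_{LK}}$. The first ingredient is a self-contained lemma evaluating the minors of $T$: $\minor{T_{KL}} = 2^{p(K,L)}$ when $K$ and $L$ interlace with $L \le K$, and $\minor{T_{KL}} = 0$ otherwise. I would prove this by elementary column operations: since $T_{ij} = 1 + \sgn(i-j)$, subtracting from each column of $T_{KL}$ the column before it produces a matrix with a banded nonzero pattern that is nonsingular exactly when the interlacing condition holds, and which reduces to triangular form with $2^{p(K,L)}$ appearing on the diagonal (a sign-reversing involution on the permutation expansion of $\det T_{KL}$ would be an alternative). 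Granting the lemma, and using that $X = X^{t}$ gives $\minor{X_{LK}} = \minor{X_{KL}}$ while $p(K,L) = p(L,K)$, the double sum collapses to $\sum_{L \le K} 2^{p(L,K)}\minor{X_{LK}} = S$ after relabelling the index sets.

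\emph{Part (b).} Expanding each minor by the Leibniz formula turns $\sum_{I,J}\minor{X_{IJ}}$ into a signed sum $\sum_{M}\sgn(M)\,w(M)$ over all $k$-edge matchings $M$ of $K_{n,n}$, where $w(M) = \prod_{(a,b)\in M}x_{ab}$ and $\sgn(M)$ is the sign of the bijection between the increasingly ordered left-endpoint set $I(M)$ and right-endpoint set $J(M)$. A convenient way to organize things is to group matchings by the multiset $D(M) = I(M) \uplus J(M)$ (of size $2k$, with all multiplicities at most $2$): each attainable $D$ determines a unique interlacing pair $(I_0, J_0)$ — sort $D$ and send its odd-indexed entries to $I_0$, its even-indexed entries to $J_0$ — with $p(I_0, J_0)$ equal to half the number of multiplicity-one elements of $D$, and the pairs $(I_0, J_0)$ so obtained run exactly over all interlacing pairs. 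Thus (b) reduces to the local identity $\sum_{M:\, D(M) = D}\sgn(M)\,w(M) = 2^{p(I_0,J_0)}\minor{X_{I_0 J_0}}$. Here the abelian group action enters: it acts on $k$-edge matchings by local moves that preserve $D$, so its orbits refine the $D$-partition, and on each orbit the signed weights either cancel completely or combine into a single monomial of $X$ with coefficient $\pm 2^{p(I_0,J_0)}$; I expect the surviving orbits to reproduce exactly the monomials of $\minor{X_{I_0 J_0}}$, each with the right sign and multiplicity $2^{p(I_0,J_0)}$. Summing over $D$ then yields $S$.

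\emph{Main obstacle.} The heart of the matter is everything concerning the group action in part (b): choosing the definition that makes the group abelian, describing its orbits on $k$-edge matchings precisely, and — the subtlest point — the sign accounting needed to see that ``crossing'' orbits cancel while the remaining orbits assemble, with multiplicity $2^{p(I,J)}$, into the interlacing minors. The $T$-minor lemma behind part (a) also requires care, but is routine by comparison.
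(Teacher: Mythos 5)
Part~(a) of your proposal is essentially correct and parallels the paper's. One small correction: the appeal to $X=X^{t}$ there is unnecessary. After applying the $T$-minor lemma one has $\sum_{L\le K}2^{p(K,L)}\minor{X_{LK}}$, and simply using $p(K,L)=p(L,K)$ and relabelling $(L,K)\mapsto(I,J)$ gives $S$; part~(a) holds for arbitrary $X$, as the paper notes. Also, your column-operations evaluation of $\minor{T_{KL}}$ is only a gesture. The paper computes these minors via the Lindstr\"om--Gessel--Viennot lemma applied to a planar network realising the factorization~\eqref{eq:TFactorization}; a careful column-reduction or inductive argument (the latter is sketched in~\cite{hls}) would also serve, but your description of a ``banded nonzero pattern'' reducing to triangular form with $2^{p(K,L)}$ on the diagonal needs to actually be carried out before it counts as a proof.

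For part~(b) there is a genuine gap. Your reorganization by the multiset $D(M)=I(M)\uplus J(M)$ is sensible, the local identity $\sum_{M:\,D(M)=D}\sgn(M)\,w(M)=2^{p(I_0,J_0)}\minor{X_{I_0J_0}}$ (with $(I_0,J_0)$ the unique interlacing pair realizing $D$) is equivalent to what must be shown, and it is true that the group orbits refine the $D$-partition. But the decisive step --- that on each orbit the signed weights either cancel or combine into a single monomial with coefficient $\pm 2^{p(I_0,J_0)}$, and that the surviving orbits reproduce precisely the monomials of $\minor{X_{I_0J_0}}$ --- is written as ``I expect'' and you yourself flag it as the main obstacle. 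None of the apparatus needed to prove it is present. Concretely, you are missing: the notion of a \emph{cluster} of a matching $\tau\colon I\to J$ (a connected component of the graph obtained by adjoining auxiliary edges $r\to r$ for $r\in I\cap J$, with those auxiliary edges then deleted); the definition of the acting group as $(\Z/2)^{\binom{n}{2}}$ flipping whole open clusters rather than single edges (flipping one edge would in general destroy the matching property); the observation that for symmetric $X$ the weight $w$ is constant on orbits; the sign lemma that flipping an open cluster $C$ multiplies $\sgn$ by $(-1)^{\separation{C}}$, where the endpoint separation $\separation{C}$ counts entries of the sorted multiset $D$ lying strictly between the labels of the two endpoints of~$C$; the characterization that an orbit contains an interlacing matching if and only if every cluster has even endpoint separation; and, from that, both the constant sign on interlacing orbits and a sign-reversing involution (flip the odd-separation cluster whose lowest-numbered node is minimal) pairing up and cancelling the matchings in each non-interlacing orbit. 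These are exactly the sign and orbit accounting that your proposal names but does not supply, and without them the argument does not go through.
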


The following is a short outline of the paper. In Section~\ref{sec:II}
we use the Cauchy--Binet formula and the Lindstr\"om--Gessel--Viennot
path-counting lemma to prove the easier part of
Theorem~\ref{thm:CDT-common-sum}, namely part~(a),
which is true regardless of whether $X$ is symmetric or not.
Part~(b) is proved in Section~\ref{sec:Main}, and here it is crucial that
$X$ is a symmetric matrix.
We introduce a group of ``flips'' and study an
action of it on the set of all $k$-edge matchings of the complete
bipartite graph $K_{n,n}$.
The main technical point is dealt with in Lemma~\ref{lem:sign-when-flipped},
and this leads to the characterization of the orbit
structure of the group action presented in
Lemma~\ref{lem:even-endpoint-separation},
Corollary~\ref{cor:interlacing} and
Corollary~\ref{cor:non-interlacing}.
From these results it then follows that when expanding each $k \times k$
minor according to the definition of the determinant and adding everything
up, terms corresponding to orbits of a certain type (``non-interlacing'')
cancel out, while the other orbits (``interlacing'') give
contributions adding up to the sum~$S$.

It is fair to say that the proof given in the present paper is not
entirely different from the one in \cite{hls}. The central concept of
(open) clusters introduced in Section~\ref{sec:Main} has its
counterpart, namely linked pairs, in the original proof. However, the
organization of the proof, in particular the identification of the
group action on the set of all $k$-edge matchings as the main
underpinning for the Canada Day Theorem, is novel.

\section{The sum of the principal $k \times k$ minors of $TX$}
\label{sec:II}

In this section we prove part~(a) of Theorem~\ref{thm:CDT-common-sum}.
Recall the Cauchy--Binet formula for the minors of a matrix product
\cite[Ch.~I, \textsection~2]{gantmacher-matrixtheoryI}:
\begin{equation}
  \label{eq:Cauchy-Binet}
  \minor{(TX)_{AB}} = \sum_{I \in \binom{[1,n]}{k}} \minor{T_{AI}} \minor{X_{IB}},
  \qquad \text{for $A,B \in \textstyle \binom{[1,n]}{k}$}
  .
\end{equation}
Applying this with $A=B=J$ we can rewrite the sum of the principal
$k \times k$ minors of $TX$ as
\begin{equation}
  \label{eq:Cauchy-Binet-diagonal}
  \sum_{ J\in \binom{[n]}{k}} \minor{(TX)_{JJ}}
  = \sum_{ I,J\in \binom{[n]}{k}} \minor{T_{JI}} \minor{X_{IJ}}
  .
\end{equation}
Next, we need to compute the minors $\minor{T_{JI}}$.
Here the notion of interlacing index sets (as defined in the Introduction) enters.

\begin{lemma}
  \label{thm:TMinors}
  Let $T$ be defined as in Theorem~\ref{thm:CanadaDay}.
  Then, for
  $I, J \in \textstyle \binom{[1,n]}{k}$,
  \begin{equation}\label{eq:TMinors}
    \minor{T_{JI}} =
    \begin{cases}
      2^{p(I,J)}, & \text{if $I \leq J$}, \\
      0, & \text{otherwise},
    \end{cases}
  \end{equation}
  where $p(I,J) = k - \card(I \cap J)$.  
\end{lemma}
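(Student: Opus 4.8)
The plan is to realize $T$ as the path matrix of a planar acyclic network, invoke the Lindstr\"om--Gessel--Viennot (LGV) lemma to rewrite $\minor{T_{JI}}$ as a count of vertex-disjoint path systems, and then evaluate that count.

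First I would exhibit the network. Observe that $T = BL$, where $L$ is the lower-triangular all-ones matrix, $L_{ij} = [i \ge j]$, and $B$ is the lower bidiagonal matrix $B_{ij} = [i=j]+[i=j+1]$; indeed $(BL)_{ij} = [i \ge j] + [i \ge j+1]$, which is $0$, $1$, $2$ according as $i<j$, $i=j$, $i>j$, matching $T_{ij} = 1+\sgn(i-j)$. Both $L$ and $B$ are path matrices of manifestly planar networks (a single downward ``ladder'' for $L$, a fan of short edges for $B$), so their concatenation is a planar network $\Gamma$ with path matrix $T$: explicitly, $\Gamma$ has sources $s_1,\dots,s_n$, ladder vertices $u_1,\dots,u_n$, sinks $t_1,\dots,t_n$, and edges $s_i \to u_i$, $s_i \to u_{i-1}$ (for $i\ge 2$), $u_m \to u_{m-1}$, and $u_j \to t_j$. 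A path from $s_i$ to $t_j$ is a choice of entry level $\kappa\in\{i-1,i\}$ onto the ladder followed by the forced descent to $u_j$ and the exit $u_j\to t_j$; such a path exists iff $\kappa\ge j$, so the number of them is $[i\ge j]+[i-1\ge j]=T_{ij}$, as wanted.

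Since $\Gamma$ is planar with sources and sinks in convex position, LGV gives $\minor{T_{JI}}$ as the number of vertex-disjoint families of paths from $\{s_{j_a}\}$ to $\{t_{i_a}\}$, all counted with sign $+1$, the $a$-th path joining $s_{j_a}$ to $t_{i_a}$. I would parametrize such a family by its entry levels $\kappa_a\in\{j_a-1,j_a\}$. Disjointness in the $B$-part makes the $\kappa_a$ distinct and increasing, while disjointness on the ladder makes the descent intervals $[i_a,\kappa_a]$ pairwise disjoint, i.e.\ $i_a\le\kappa_a<i_{a+1}$ with $i_{k+1}:=\infty$; together these amount to the single index-by-index constraint $\kappa_a\in\{j_a-1,j_a\}\cap[i_a,i_{a+1})$. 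Hence the count factors as $\prod_{a=1}^k n_a$ with $n_a=\card\bigl(\{j_a-1,j_a\}\cap[i_a,i_{a+1})\bigr)$.

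It then remains to check two things. If $I\not\le J$, some link of the interlacing chain \eqref{eq:interlacing} fails, and in either case ($i_a>j_a$ or $j_a>i_{a+1}$) the set $\{j_a-1,j_a\}$ lies entirely outside $[i_a,i_{a+1})$, so $n_a=0$ and the product vanishes. If $I\le J$, then $i_a\le j_a\le i_{a+1}$ for every $a$, so $n_a=[j_a\ne i_{a+1}]+[j_a\ne i_a]\in\{1,2\}$, equalling $2$ exactly when $j_a\notin\{i_a,i_{a+1}\}$; moreover $a\mapsto j_a$ is a bijection from $\{a:j_a\in\{i_a,i_{a+1}\}\}$ onto $I\cap J$ (it lands in $I\cap J$ since $j_a\in J$, and is onto because $j_a=i_b$ forces $i_a\le i_b\le i_{a+1}$, hence $b\in\{a,a+1\}$), so the number of indices with $n_a=2$ is $k-\card(I\cap J)=p(I,J)$, giving $\prod_a n_a=2^{p(I,J)}$. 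The one genuinely creative step I expect to be the obstacle is spotting the planar realization of $T$ (the factorization $T=BL$, or an equivalent direct construction); once $\Gamma$ is in hand, LGV does the heavy lifting and what remains is a routine enumeration, the only subtlety being the bijection that identifies the number of ``doubled'' indices with $p(I,J)$.
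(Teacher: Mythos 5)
Your proof is correct and follows essentially the same route as the paper's: realize $T$ as the path matrix of a planar network via the factorization $T=BL$, apply the Lindstr\"om--Gessel--Viennot lemma, and observe that each path from $s_{j_a}$ to $t_{i_a}$ in a vertex-disjoint family admits one or two choices according as $j_a \in \{i_a,i_{a+1}\}$ or not, so the count is $\prod_a n_a$. The paper phrases this same count geometrically in terms of ``windows'' (height zero $\leftrightarrow j_a=i_a$, shared top edge $\leftrightarrow j_a=i_{a+1}$), whereas you make the product formula and the bijection onto $I\cap J$ explicit, but the substance is identical.
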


\begin{proof}
  The matrix $T$ is the path matrix of a planar directed graph of the form
  illustrated below in the case $n=4$
  (i.e., the entry $T_{ab}$ counts the number of paths -- zero, one, or two --
  from source vertex number~$a$
  on the left to sink vertex number~$b$ on the right):
  \begin{center}
    \begin{tikzpicture}[yscale=-1,sourcesink/.style={circle,draw=black!100,fill=black!50,inner sep = 0,minimum size=2mm},inter/.style={circle,draw=black!100,fill=black!5,inner sep=0,minimum size=2mm}]
      \node (isL)      at (-4,2) [sourcesink,label=180:1] {};
      \node (is+1L)    at (-4,1) [sourcesink,label=180:2] {};
      \node (is+1-1L)  at (-4,0) [sourcesink,label=180:3] {};
      \node (is+1+0L)  at (-4,-1) [sourcesink,label=180:4] {};
      \node (isR)      at (3,2) [sourcesink,label=0:1] {};
      \node (is+1R)    at (3,1) [sourcesink,label=0:2] {};
      \node (is+1-1R)  at(3,0) [sourcesink,label=0:3] {};
      \node (is+1+0R)  at(3,-1) [sourcesink,label=0:4] {};

      \node (is+1M1)   at(-3,1) [inter,draw] {};
      \node (is+1-1M1) at(-3,0) [inter,draw] {};
      \node (is+1+0M1) at(-3,-1) [inter,draw] {};
      \node (isM2)     at(-2,2) [inter,draw] {};
      \node (is+1M2)   at(-2,1) [inter,draw]  {};
      \node (is+1-1M2) at(-2,0) [inter,draw] {};

      \node (isM3)     at(2,2)   [inter,draw] {};
      \node (is+1M3)   at(1,1)   [inter,draw] {};
      \node (is+1-1M3) at(0,0)   [inter,draw] {};
      \node (is+1+0M3) at(-1,-1) [inter,draw] {};
      
      \begin{scope}[->,thick]
        \draw (isL) to (isM2);
        \draw (isM2) to (isM3);
        \draw (isM3) to (isR);
        \draw (is+1L) to (is+1M1);
        \draw (is+1M1) to (is+1M2);
        \draw (is+1M2) to (is+1M3);
        \draw (is+1M3) to (is+1R);
        \draw (is+1-1L) to (is+1-1M1);
        \draw (is+1-1M1) to (is+1-1M2);
        \draw (is+1-1M2) to (is+1-1M3);
        \draw (is+1-1M3) to (is+1-1R);
        \draw (is+1+0L) to (is+1+0M1);
        \draw (is+1+0M1) to (is+1+0M3);
        \draw (is+1+0M3) to (is+1+0R);
        
        \draw (is+1M1) to (isM2);
        \draw (is+1-1M1) to (is+1M2);
        \draw (is+1+0M1) to (is+1-1M2);
        
        \draw (is+1+0M3) to (is+1-1M3);
        \draw (is+1-1M3) to (is+1M3);
        \draw (is+1M3) to (isM3);
      \end{scope}
    \end{tikzpicture}
  \end{center}
  By the Lindstr\"om--Gessel--Viennot lemma
  \cite{KarlinMcGregor,lindstrom,gessel-viennot,Aigner},
  the minor $\minor{T_{JI}}$ equals
  the number of vertex-disjoint path families from sources
  indexed by~$J$ to sinks indexed by~$I$.

  Consider a vertex-disjoint path family
  from the source set $J = \{ 1\leq j_{1}<j_{2}<...<j_{k} \leq n \}$
  to the sink set $I = \{ 1 \leq i_{1}<i_{2}<...<i_{k} \leq n \}$.
  The planarity of of the graph allows only paths $j_{m} \to i_{m}$.
  Clearly $i_{m} \le j_{m}$, since no path can go upwards,
  and moreover $j_{m} \le i_{m+1}$, since otherwise both the path $j_{m} \to i_{m}$
  and the path $j_{m+1} \to i_{m+1}$ would have to pass through the vertex
  immediately to the left of sink vertex~$i_{m+1}$.
  Hence, if the interlacing condition $I \leq J$ is not satisfied,
  then there are no vertex-disjoint path families from $J$ to~$I$,
  and therefore $\minor{T_{JI}} = 0$ in this case.

  Suppose now that $I \leq J$.
  For each $m = 1,\dots,k$ we draw a rectangular window over the graph,
  with its upper left corner at source vertex~$j_{m}$
  and its lower right corner at sink vertex~$i_{m}$.
  Imagine trying to construct a vertex-disjoint path family from $J$ to~$I$.
  There will be only one possible path $j_{m} \to i_{m}$ if
  the $m$th window has its height $j_{m} - i_{m}$ equal to zero,
  or if it shares its top edge with another
  window; otherwise there will be two possible paths $j_{m} \to i_{m}$.
  Indeed, when the height is
  zero there is only one way to start and finish on the same level.
  When the top edge is shared, there is again only one path,
  since the path corresponding to the window directly above must use the
  vertex immediately to the left of sink vertex~$i_{m+1}$
  (which is on the same level as source vertex~$j_{m}$).
  Therefore, if we remove all windows which are of height zero or
  share their top edge with another window, then the number of
  possible vertex-disjoint path systems will be given by $2$ to the
  power of the number of remaining windows, in other words
  $2^{\card(I')}$ where $I'=I\setminus(I \cap J)$.
\end{proof}

Now, inserting the value of $\minor{T_{JI}}$ from this lemma into
equation \eqref{eq:Cauchy-Binet-diagonal}, we obtain
\begin{equation}
  \sum_{ J\in \binom{[n]}{k}} \minor{(TX)_{JJ}}
  = \sum_{\substack{I,J \in \binom{[n]}{k} \\[0.5ex] I \le J}} 2^{p(I,J)} \minor{X_{IJ}}
  = S
  ,
\end{equation}
which finishes the proof of part (a) of Theorem~\ref{thm:CDT-common-sum}.

\begin{remark}
  There are several ways to compute the minors $\minor{T_{JI}}$;
  see \cite{hls} for another argument using induction on~$n$.
  Note that the proof above is
  implicitly taking advantage of the factorization
  \begin{equation}\label{eq:TFactorization}
    T =
    \begin{pmatrix}
      1 & 0 & 0 & \dotsb & 0\\
      1 & 1 & 0 & \dotsb & 0\\
      0 & 1 & 1 & \ddots & \vdots\\
      \vdots & \ddots & \ddots & \ddots & 0\\
      0 & \dotsb & 0 & 1 & 1\\
    \end{pmatrix}
    \begin{pmatrix}
      1 & 0 & 0 & \dotsb & 0\\
      1 & 1 & 0 & \dotsb & 0\\
      1 & 1 & 1 & \ddots & \vdots\\
      \vdots & \ddots & \ddots & \ddots & 0\\
      1 & \dotsb & 1 & 1 & 1\\
    \end{pmatrix}.
  \end{equation}
\end{remark}

\section{The sum of all $k \times k$ minors of $X$}\label{sec:Main}

In this section we prove part~(b) of Theorem~\ref{thm:CDT-common-sum}.

\subsection{Minors and matchings}
\label{sec:minors-and-matchings}

By the definition of the determinant, we have
\begin{equation}
  \minor{X_{IJ}} = \det
  \begin{pmatrix}
    x_{i_1 j_1} & \dots & x_{i_1 j_k} \\
    \vdots & & \vdots \\
    x_{i_k j_1} & \dots & x_{i_k j_k} \\
  \end{pmatrix}
  = \sum_{\pi \in S_k} \sgn(\pi) x_{i_1 j_{\pi(1)}} \dotsm x_{i_k j_{\pi(k)}}
  ,
\end{equation}
where $S_k$ is the symmetric group of permutations on $k$ elements
(i.e., bijections $\pi \colon [k] \to [k]$).
A less index-heavy notation is obtained by using that
(for given $I$ and~$J$)
each permutation $\pi \colon [k] \to [k]$ corresponds to a unique
bijection $\tau \colon I \to J$ via $\tau(i_k) = j_{\pi(k)}$.
Setting $\sgn(\tau) = \sgn(\pi)$
and $\detterm{X}{\tau} = \prod_{i \in I} x_{i \tau(i)}$,
we simply get
\begin{equation}
  \minor{X_{IJ}} = \sum_{\tau \colon I \to J} \sgn(\tau) \detterm{X}{\tau}
\end{equation}
where the sum runs over all bijections from $I$ to~$J$.
We visualize such a bijection~$\tau$ as a bipartite graph;
for example, with $n=5$, the bijection
$\tau \colon  \{ 2,3,4 \} \to \{ 1,3,5 \}$
given by
$\tau(2)=3$, $\tau(3)=5$, $\tau(4)=1$
is drawn as
\begin{center}
  \begin{tikzpicture}[scale=0.7]
    \node (a1) at (0,1) [source,label={180:$    1$}] {};
    \node (a2) at (0,2) [source,label={180:$i_1=2$}] {};
    \node (a3) at (0,3) [source,label={180:$i_2=3$}] {};
    \node (a4) at (0,4) [source,label={180:$i_3=4$}] {};
    \node (a5) at (0,5) [source,label={180:$    5$}] {};
    \node (b1) at (3,1) [sink,label={0:$1=j_1$}] {};
    \node (b2) at (3,2) [sink,label={0:$2    $}] {};
    \node (b3) at (3,3) [sink,label={0:$3=j_2$}] {};
    \node (b4) at (3,4) [sink,label={0:$4    $}] {};
    \node (b5) at (3,5) [sink,label={0:$5=j_3$}] {};
    \begin{scope}[thick]
      \draw (a2) to (b3);
      \draw (a3) to (b5);
      \draw (a4) to (b1);
    \end{scope}
    \end{tikzpicture}
\end{center}
and it corresponds (for the given sets $I$ and~$J$)
to the permutation~$\pi \in S_3$ represented by the graph
\begin{center}
  \begin{tikzpicture}[scale=0.7]
    \bipartite{3}
    \begin{scope}[thick]
      \draw (a1) to (b2);
      \draw (a2) to (b3);
      \draw (a3) to (b1);
    \end{scope}
    \end{tikzpicture}
\end{center}
The sign of~$\tau$ (and~$\pi$)
is $+1$ or~$-1$ depending on whether the crossing number of the graph
is even or odd. In this example there are two crossings, so
$\sgn(\tau) \detterm{X}{\tau} = +x_{23} x_{35} x_{41}$
(and this is one of the six terms in the $3 \times 3$ minor $\minor{X_{234,135}}$).

Note that when composing two bijections
$\tau_1 \colon I \to L$ and $\tau_2 \colon L \to J$,
the signs obey the same rule as for the corresponding permutations:
\begin{equation*}
  \sgn(\tau_2 \circ \tau_1) = \sgn(\tau_2) \sgn(\tau_1).
\end{equation*}

Choosing $k$-element sets $I$ and $J$ together with a bijection $\tau \colon I \to J$
is equivalent to choosing a $k$-edge matching
of the complete bipartite graph $K_{n,n}$:
\begin{center}
  \begin{tikzpicture}[scale=0.7]
    \bipartite{5}
    \begin{scope}[thin]
      \foreach \n in {1,...,5}
        \foreach \m in {1,...,5} {
          \draw (a\n) to (b\m);
        }
    \end{scope}
  \end{tikzpicture}
  \qquad
  \begin{tikzpicture}[scale=0.7]
    \bipartite{5}
    \begin{scope}[thin,dotted]
      \foreach \n in {1,...,5}
        \foreach \m in {1,...,5} {
          \draw (a\n) to (b\m);
        }
    \end{scope}
    \begin{scope}[thick]
      \draw (a2) to (b3);
      \draw (a3) to (b5);
      \draw (a4) to (b1);
    \end{scope}
    \end{tikzpicture}
\end{center}
(Recall that a \emph{matching} of a graph $X=(V,E)$ with vertex set~$V$ and edge set~$E$
is a subset $F \subseteq E$ such that no two edges in $F$ share a common vertex.
One may equivalently think of the matching as being the subgraph $(V,F)$.
This habit of not distinguishing a graph from its edge set, when the underlying
vertex set is understood, will be used quite frequently.)

Fix $n$ and $k$,
and let $\mathcal{M} = \mathcal{M}_{n,k}$
denote the set of $k$-edge matchings of~$K_{n,n}$.
We will use the same symbol $\tau$ both for such a matching and
for the corresponding bijection,
and slightly abuse the language by speaking about matchings $\tau \colon I \to J$.
If $\tau(i) = j$ holds for the bijection~$\tau$,
then we say that the matching~$\tau$ contains an edge $i \to j$. 
(The graph $K_{n,n}$ is undirected, but the arrow notation is
convenient for distinguishing the left nodes labelled $1,\dots,n$ from the
right nodes also labelled $1,\dots,n$.)

When summing all $k \times k$ minors of an $n \times n$ matrix~$X$
and expanding each minor,
the whole sum turns into an alternating sum over all $k$-edge matchings of~$K_{n,n}$:
\begin{equation}
  \sum_{I,J \in \binom{[n]}{k}} \minor{X_{IJ}}
  = \sum_{I,J \in \binom{[n]}{k}} \sum_{\tau \colon I \to J} \sgn(\tau) \detterm{X}{\tau}
  = \sum_{\tau \in \mathcal{M}} \sgn(\tau) \detterm{X}{\tau}.
\end{equation}
In order to prove that this equals
\begin{equation*}
  \sum_{I \le J} 2^{p(I,J)} \minor{X_{IJ}}
  = \sum_{I \le J} \,\, \sum_{\tau \colon I \to J} 2^{p(I,J)} \sgn(\tau) \detterm{X}{\tau}
\end{equation*}
when $X$ is symmetric,
as claimed in Theorem~\ref{thm:CDT-common-sum},
we will introduce a certain abelian group $\mathcal{G}$ which acts on
$\mathcal{M}$ in a way which preserves the weights $\detterm{X}{\tau}$
but may change the signs $\sgn(\tau)$.
Then we compute the sum $\sum_{\tau \in \mathcal{M}}$ by adding the terms for
each group orbit separately and then summing over all orbits.
As we will see, each orbit containing a matching $\tau \colon I\rightarrow J$
with interlacing index sets $I\leq J$ contributes
$2^{p(I,J)}$ terms which all have the same sign,
while the other orbits contain equally many positive and negative terms
and therefore cancel out.

\subsection{Clusters of a matching}

Fix a matching $\tau \colon I \to J$
(viewed as a bipartite graph).
Temporarily add auxiliary horizontal edges $r \to r$
for all $r \in I \cap J$.
Split the resulting (multi)graph $\tilde{\tau}$ into connected components,
and then remove the auxiliary edges.
The remnants of the components of $\tilde{\tau}$ form a partition of the edges
of $\tau$ into what we will call \emph{clusters}.

Since $\tau$ is a matching, no vertex in $\tilde{\tau}$ can have degree greater
than two, and therefore the components of $\tilde{\tau}$ are paths.
A cluster is either called \emph{closed} or \emph{open},
depending on whether the corresponding component of $\tilde{\tau}$
is a closed or an open path.
The endpoints of an open path will also be said to be the endpoints of that open cluster.

\begin{example}
  \label{ex:clusters}
  Let $n=8$ and $k=7$.
  Here is a matching $\tau \colon I \to J$, where
  $I = \{ 1,2,3,4,5,6,8 \}$ and $J = \{ 1,2,4,6,7,8 \}$,
  together with its companion $\tilde{\tau}$ obtained by adding auxiliary horizontal edges
  $r \to r$ for $r = I \cap J = \{ 1,2,4,5,6,8 \}$:
  \begin{center}
    \begin{tikzpicture}[scale=0.7]
      \bipartite{8}
      \begin{scope}[thick]
        \foreach \m/\n in {1/6,2/8,3/4,4/2,5/5,6/1,8/7}
          \draw (a\m) to (b\n);
      \end{scope}
      \node at (1.5,0) {$\tau$};
    \end{tikzpicture}
    \qquad\qquad
    \begin{tikzpicture}[scale=0.7]
      \bipartite{8}
      \begin{scope}[thick]
        \foreach \m/\n in {1/6,2/8,3/4,4/2,5/5,6/1,8/7}
          \draw (a\m) to (b\n);
        \foreach \n in {1,2,4,6,8}
          \draw (a\n) to (b\n);
        \draw (a5) to [out=-10,in=190] (b5);
      \end{scope}
      \node at (1.5,0) {$\tilde{\tau}$};
    \end{tikzpicture}
  \end{center}
  There are three connected components in~$\tilde{\tau}$
  (one open path with endpoints $3 \in I$ and~$7 \in J$,
  and two closed paths):
  \begin{center}
    \begin{tikzpicture}[scale=0.7]
      \bipartite{8}
      \begin{scope}[thick]
        \foreach \m/\n in {2/8,3/4,4/2,8/7,2/2,4/4,8/8}
          \draw (a\m) to (b\n);
      \end{scope}
    \end{tikzpicture}
    \qquad
    \begin{tikzpicture}[scale=0.7]
      \bipartite{8}
      \begin{scope}[thick]
        \foreach \m/\n in {1/6,6/1,1/1,6/6}
          \draw (a\m) to (b\n);
      \end{scope}
    \end{tikzpicture}
    \qquad
    \begin{tikzpicture}[scale=0.7]
      \bipartite{8}
      \begin{scope}[thick]
        \draw (a5) to (b5);
        \draw (a5) to [out=-10,in=190] (b5);
      \end{scope}
    \end{tikzpicture}
  \end{center}
  Removing the auxiliary edges,
  we obtain the three clusters of the matching~$\tau$
  (one open and two closed):
  \begin{center}
    \begin{tikzpicture}[scale=0.7]
      \bipartite{8}
      \begin{scope}[thick]
        \foreach \m/\n in {2/8,3/4,4/2,8/7}
          \draw (a\m) to (b\n);
      \end{scope}
      \node at (1.5,0) {$C_1$};
    \end{tikzpicture}
    \qquad
    \begin{tikzpicture}[scale=0.7]
      \bipartite{8}
      \begin{scope}[thick]
        \foreach \m/\n in {1/6,6/1}
          \draw (a\m) to (b\n);
      \end{scope}
      \node at (1.5,0) {$C_2$};
    \end{tikzpicture}
    \qquad
    \begin{tikzpicture}[scale=0.7]
      \bipartite{8}
      \begin{scope}[thick]
        \draw (a5) to (b5);
      \end{scope}
      \node at (1.5,0) {$C_3$};
    \end{tikzpicture}
  \end{center}
\end{example}

\begin{definition}[Endpoint separation]
  Let $\tau \colon I \rightarrow J$ be a matching and $C$ one of its open clusters.  
  Consider the list consisting of all numbers in $I\cup J$, sorted in ascending order.
  (If $i_{m} = j_{m'}$ for some $m$ and $m'$,
  then include that number twice in the list.)
  Let the \emph{endpoint separation} $\separation{C}$ of the cluster~$C$
  be the number of entries in this sorted list lying strictly between
  the numbers labelling the two endpoints of~$C$.
  For closed clusters~$C$, we set $\separation{C}=0$.
\end{definition}

\begin{remark} 
Observe that the endpoint separation is a property of both 
the matching and the cluster, and not just of the cluster itself.
\end{remark} 

\begin{example}
  \label{ex:endpoint-separation}
  The open cluster $C_1$ in Example~\ref{ex:clusters},
  with endpoints labelled $3$ and~$7$,
  has endpoint separation $\separation{C_1} = 6$,
  since there are six numbers ($4$, $4$, $5,5$, $6,6$) strictly between the
  $3$ and~$7$ in the list $(1,1,2,2,3,4,4,5,5,6,6, 7,8,8)$.
  For the closed clusters,
  we have $\separation{C_2} = \separation{C_3} = 0$ by definition.
\end{example}

\subsection{The group of flips}

Given~$n$, let $\mathcal{G} = \mathcal{G}_n$
be the abelian group obtained by taking the direct sum
of $\binom{n}{2}$ copies of the two-element group $\Z/(2)$
(one copy for each pair $(i,j)$ with $1 \le i < j \le n$).
We define an action of $\mathcal{G}$ on the set $\mathcal{M}$
of $k$-edge matchings of~$K_{n,n}$ as follows:
\begin{itemize}
\item Let $f_{ij} = (0,\dots,0,1,0,\dots,0) \in \mathcal{G}$,
  where the~$1$ is in the $(i,j)$th copy of $\Z/(2)$.
  The elements $\{ f_{ij} \}_{i<j}$ generate $\mathcal{G}$,
  so it is enough to define how they act.
\item If there is an open cluster $C$ in the matching $\tau \in \mathcal{M}$
  containing one of the edges $i \to j$ and $j \to i$
  (note that an open cluster cannot contain both),
  then let $f_{ij} \smallbullet \tau$ be the matching obtained by
  \emph{flipping} the whole cluster~$C$, i.e., replacing each edge $a \to b$
  in~$C$ by $b \to a$ (and leaving all other edges in $\tau$ as they are).
\item Otherwise, let $f_{ij}$ do nothing: $f_{ij} \smallbullet \tau = \tau$.
\end{itemize}
It is straightforward to verify that this really defines a group action as claimed,
since flips commute and flipping the same cluster twice is the identity
transformation.
Note that it is the flipping of a whole cluster (rather than just an
individual edge) that ensures that $f_{ij} \smallbullet \tau$ is
still a matching.

\begin{example}
  \label{ex:flip}
  When $f_{28}$ acts on the matching $\tau$ of Example~\ref{ex:clusters},
  the open cluster $C_1$ (here drawn dotted) is flipped,
  since it is open (with endpoints $3$ and~$7$)
  and contains the edge $2 \to 8$:
  \begin{center}
    \begin{tikzpicture}[scale=0.7]
      \bipartite{8}
      \begin{scope}[thick]
        \foreach \m/\n in {1/6,5/5,6/1}
          \draw (a\m) to (b\n);
        \foreach \m/\n in {2/8,3/4,4/2,8/7}
          \draw[dotted] (a\m) to (b\n);
      \end{scope}
      \node at (1.5,0) {$\tau$};
    \end{tikzpicture}
    \qquad\qquad
    \begin{tikzpicture}[scale=0.7]
      \bipartite{8}
      \begin{scope}[thick]
        \foreach \m/\n in {1/6,5/5,6/1}
          \draw (a\m) to (b\n);
        \foreach \m/\n in {2/8,3/4,4/2,8/7}
          \draw[dotted] (b\m) to (a\n);
      \end{scope}
      \node at (1.5,0) {$f_{28} \smallbullet \tau$};
    \end{tikzpicture}
  \end{center}
\end{example}

\begin{lemma}
  \label{lem:weight-when-flipped}
  If the matrix~$X$ is symmetric,
  then $\detterm{X}{g \smallbullet \tau} = \detterm{X}{\tau}$
  for all flips $g \in \mathcal{G}$ and all matchings $\tau \in \mathcal{M}$.
\end{lemma}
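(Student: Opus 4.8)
The plan is to show that flipping a single cluster does not change the monomial $\detterm{X}{\tau} = \prod_{i\in I} x_{i,\tau(i)}$, and then iterate. First I would observe that it suffices to treat a single generator $f_{ij}$ acting nontrivially, i.e. the case where $f_{ij}$ flips some open cluster $C$; the case where $f_{ij}$ acts trivially is immediate, and a general group element is a product of such generators, so the result follows by repeated application once the single-flip case is established.

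So fix such an open cluster $C$ in $\tau$, with edge set $\{a_1\to b_1,\dots,a_r\to b_r\}$ (as a bipartite graph), and let $C'$ denote the flipped cluster, whose edges are $\{b_1\to a_1,\dots,b_r\to a_r\}$. The monomial $\detterm{X}{\tau}$ factors as a product over clusters, and only the factor coming from $C$ is affected, so I only need to compare the $C$-factor of $\detterm{X}{\tau}$, namely $\prod_{m=1}^r x_{a_m b_m}$, with the $C'$-factor of $\detterm{X}{f_{ij}\smallbullet\tau}$, namely $\prod_{m=1}^r x_{b_m a_m}$. Since $X$ is symmetric, $x_{a_m b_m} = x_{b_m a_m}$ for each $m$, so these two products are literally equal term by term. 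Hence $\detterm{X}{f_{ij}\smallbullet\tau} = \detterm{X}{\tau}$.

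I would then close the argument by induction on the number of generators in a factorization of $g\in\mathcal{G}$: writing $g = f_{i_1 j_1} \cdots f_{i_s j_s}$, and using that each $f_{i_t j_t}$ either does nothing or flips an open cluster of the current matching, the single-flip case gives $\detterm{X}{f_{i_t j_t}\smallbullet\sigma} = \detterm{X}{\sigma}$ for every matching $\sigma$, so the weight is unchanged at every step and therefore $\detterm{X}{g\smallbullet\tau} = \detterm{X}{\tau}$.

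There is essentially no obstacle here; the only point that requires a moment's care is the bookkeeping that the weight genuinely factors over clusters (so that flipping one cluster leaves the other factors untouched) and that, within the flipped cluster, the multiset of edges $\{a_m\to b_m\}$ is replaced exactly by $\{b_m\to a_m\}$ — after which symmetry of $X$ does all the work. I would state this cleanly as: $\detterm{X}{\tau} = \prod_{C \text{ cluster of }\tau} \prod_{a\to b\in C} x_{ab}$, note that flipping $C$ replaces its inner product by $\prod_{a\to b\in C} x_{ba} = \prod_{a\to b\in C} x_{ab}$ by symmetry, and conclude.
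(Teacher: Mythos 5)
Your argument is correct and is essentially the paper's proof, just written out in more detail: both reduce to observing that flipping a cluster replaces each factor $x_{ab}$ by $x_{ba}$, which is unchanged by symmetry of $X$. The extra bookkeeping about factoring over clusters and inducting over generators of $\mathcal{G}$ is harmless but not really needed, since the paper's one-line observation already applies edge by edge.
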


\begin{proof}
  This is immediate, since replacing the edge $a \to b$ by the edge $b \to a$
  in the matching~$\tau \colon I \to J$
  corresponds to replacing the matrix entry $X_{ab}$ by $X_{ba}$ in the
  product~$\detterm{X}{\tau} = \prod_{i \in I} X_{i \tau(i)}$.
\end{proof}

\begin{example}
  \label{ex:weight-flipped}
  The flip in Example~\ref{ex:flip} corresponds to changing
  \begin{equation*}
    \detterm{X}{\tau} = X_{16} X_{28} X_{34} X_{42} X_{55} X_{61} X_{87}
  \end{equation*}
  into
  \begin{equation*}
    \detterm{X}{f_{28} \smallbullet \tau} = X_{16} X_{82} X_{43} X_{24} X_{55} X_{61} X_{78}
    .
  \end{equation*}
\end{example}

A less trivial aspect of the group action is the relation between
$\sgn(g \smallbullet \tau)$ and~$\sgn(\tau)$,
and this is where the endpoint separation and the
interlacing condition $I \le J$ will be of importance.

\begin{lemma}
  \label{lem:sign-when-flipped} 
  If the cluster $C$ is flipped when $f_{ij}$ acts on $\tau$, then
  \begin{equation}
    \label{eq:clusterflip}
    \sgn(f_{ij} \smallbullet \tau) = (-1)^{\separation{C}} \sgn(\tau).
  \end{equation}
\end{lemma}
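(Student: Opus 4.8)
The plan is to compute the sign via crossing numbers. Write $\sgn(\tau)=(-1)^{\operatorname{cr}(\tau)}$, where $\operatorname{cr}(\tau)$ counts the pairs of edges of~$\tau$ that cross in the bipartite drawing; recall that $a\to b$ and $a'\to b'$ cross precisely when $(a-a')(b-b')<0$. I would split the edges of~$\tau$ into those lying in the flipped (open) cluster~$C$ and the remaining edges, which I call~$R$, and split $\operatorname{cr}(\tau)$ accordingly into crossing pairs inside~$C$, inside~$R$, and with one edge in each. Flipping leaves~$R$ alone, so crossings inside~$R$ are unchanged; and reversing every edge of~$C$ at once replaces each product $(a-a')(b-b')$ by $(b-b')(a-a')$, so crossings inside~$C$ are unchanged as well. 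Hence $\sgn(f_{ij}\smallbullet\tau)\,\sgn(\tau)=(-1)^{\Delta}$, where $\Delta$ is the net change in the number of crossings between $C$ and~$R$, and the whole problem reduces to showing $\Delta\equiv\separation{C}\pmod2$.

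To handle~$\Delta$, I describe~$C$ by the sequence of levels $a=u_0,u_1,\dots,u_p=b$ that its path in~$\tilde{\tau}$ runs through; here the endpoints of~$C$ are $a\in I'$ and $b\in J'$, the interior levels $u_1,\dots,u_{p-1}$ lie in $I\cap J$, and the edges of~$C$ are exactly $u_0\to u_1,\ \dots,\ u_{p-1}\to u_p$. Fix an edge $c\to d$ of~$R$; then $c$ and $d$ avoid every $u_m$, since the left vertices of~$R$ are disjoint from $\{u_0,\dots,u_{p-1}\}$ while $u_p=b\notin I$, and dually on the right. Flipping turns $u_{m-1}\to u_m$ into $u_m\to u_{m-1}$; since a product of two nonzero reals is negative exactly when one of its factors is, the change modulo~$2$ in the crossing status of the pair $\{u_{m-1}\to u_m,\ c\to d\}$ equals $[u_{m-1}<c]+[u_m<c]+[u_{m-1}<d]+[u_m<d]$, which reduces modulo~$2$ to $[\,c\text{ strictly between }u_{m-1}\text{ and }u_m\,]+[\,d\text{ strictly between }u_{m-1}\text{ and }u_m\,]$ (writing $[\,P\,]$ for $1$ when $P$ holds and $0$ otherwise). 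Summing over $m=1,\dots,p$ and invoking the elementary fact that the number of steps $u_{m-1}\to u_m$ having a fixed level $z\notin\{u_0,\dots,u_p\}$ strictly between their endpoints has the same parity as ``$z$ lies strictly between $u_0$ and~$u_p$'', the contribution of the edge $c\to d$ collapses to $[\,c\text{ between }a\text{ and }b\,]+[\,d\text{ between }a\text{ and }b\,]$ modulo~$2$. Hence $\Delta\equiv\sum_{c\to d\in R}\bigl([\,c\text{ between }a\text{ and }b\,]+[\,d\text{ between }a\text{ and }b\,]\bigr)\pmod2$.

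It remains to recognize this sum as $\separation{C}\pmod2$. By definition $\separation{C}$ counts, with multiplicity, the members of~$I$ together with the members of~$J$ lying strictly between $a$ and~$b$. Sorting $I$ and~$J$ by whether a vertex belongs to~$C$ or to~$R$: the left vertices of~$C$ are $\{u_0,\dots,u_{p-1}\}$ and its right vertices are $\{u_1,\dots,u_p\}$, so each interior level $u_1,\dots,u_{p-1}$ is counted exactly twice while the endpoints $a=u_0$ and $b=u_p$ are not counted at all -- an even contribution; and the remaining members are exactly the pairs $c,d$ as $c\to d$ ranges over~$R$, reproducing the displayed sum. Thus $\separation{C}\equiv\Delta\pmod2$, which is the assertion of the lemma.

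I expect the main obstacle to be bookkeeping rather than any single computation. Flipping~$C$ changes both the row index set and the column index set of the matching, so one must work with crossing numbers of honest drawings, not with permutation signs referred to a fixed pair of index sets; and one must carefully isolate the even contributions on both sides -- the crossings internal to~$C$, and the $C$-portion of $\separation{C}$ -- so that only the edges of~$R$ survive the comparison modulo~$2$. The two auxiliary parity statements (the parity of $[\,z<u\,]+[\,z<v\,]$ for $z\notin\{u,v\}$, and the parity of the number of sign changes along a finite sequence) are elementary, and once they are in place the rest is a short mod-$2$ computation with essentially no case analysis.
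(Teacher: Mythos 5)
Your proof is correct, and it takes a genuinely different route from the paper's.

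The paper factors $\tau = \tau_2 \circ \tau_1$, where $\tau_1$ carries the cluster edges (as a bijection $I \to I+b-a$) and $\tau_2$ the rest; then $f_{ij} \smallbullet \tau = \tau_3 \circ \tau_1^{-1}$, where $\tau_3$ differs from $\tau_2$ only in having the horizontal edge $a\to a$ in place of $b\to b$. By multiplicativity of sign and $\sgn(\tau_1^{-1})=\sgn(\tau_1)$, the problem collapses to comparing $\sgn(\tau_3)$ with $\sgn(\tau_2)$, which the paper settles by a topological ``sliding'' argument: continuously move the detached edge from level $b$ to level $a$, and observe that the crossing number changes parity each time one endpoint passes a matched vertex, of which there are exactly $\separation{C}$.

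You avoid the composition entirely and compute the change in crossing number directly. Splitting crossings into $C$--$C$, $R$--$R$, and $C$--$R$ pairs and noting only the last class can change parity is the right simplification; your per-edge bookkeeping with indicator functions, the parity identity $[u_{m-1}<c]+[u_m<c]\equiv[\,c\text{ between }u_{m-1},u_m\,]\pmod 2$, and the telescoping observation that the number of level-crossings of $z$ along the sequence $u_0,\dots,u_p$ has the parity of $[\,z\text{ between }u_0,u_p\,]$ are all correct, as is the final reconciliation with $\separation{C}$. Two small points worth tightening if you write this up formally: (i) the interior levels $u_1,\dots,u_{p-1}$ are not literally ``counted exactly twice'' in $\separation{C}$ -- each is counted twice or zero times depending on whether it lies strictly between $a$ and $b$ -- but in both cases the contribution is even, which is all you need; (ii) it is worth stating explicitly (as you do implicitly) that $\sgn(\tau)$ for a matching $\tau\colon I\to J$ equals $(-1)^{\operatorname{cr}(\tau)}$ of the drawing on the full $[n]$ levels, not just on $[k]$, since the relative orders coincide. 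The trade-off between the two approaches: the paper's is shorter and visually elegant but leans on a continuity heuristic that a careful reader would want made combinatorial; yours is longer but fully discrete and self-contained, with no appeal to ``sliding'' and no need for the auxiliary bijections $\tau_1,\tau_2,\tau_3$.
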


\begin{proof}
  As remarked in Section~\ref{sec:minors-and-matchings},
  the composition of matchings $\tau_1 \colon I \to L$ and $\tau_2 \colon L \to J$
  (thought of as functions) satisfies
  $\sgn(\tau_2 \circ \tau_1) = \sgn(\tau_2) \sgn(\tau_1)$.
  We will split $\tau \colon I \to J$ into such a composition
  where $\tau_1$ deals with the edges in the cluster
  and $\tau_2$ with the remaining edges.

  Consider an open cluster~$C$ in~$\tau$ with endpoints
  $a \in I' = I \setminus (I \cap J)$
  and~$b \in J' = J \setminus (I \cap J)$:
  \begin{equation*}
    \tau(a) = c_1
    ,\quad
    \tau(c_1) = c_2
    ,\quad
    \tau(c_2) = c_3
    ,\quad
    \dots
    ,\quad
    \tau(c_{m-1}) = c_{m}
    ,\quad
    \tau(c_m) = b
    .
  \end{equation*}
  We let $K = \{ c_1, c_2, \dots, c_m \} \subset I \cap J$ and define
  $\tau_1 \colon I \to I+b-a$ and $\tau_2 \colon I+b-a \to J$ by
  \begin{equation}
    \tau_1(x) =
    \begin{cases}
      \tau(x), & \text{if $x \in K+a$}
      ,\\
      x, & \text{if $x \in I \setminus (K+a)$}
      ,
    \end{cases}
    \qquad
    \tau_2(x) =
    \begin{cases}
      x, & \text{if $x \in K+b$}
      ,\\
      \tau(x), & \text{if $x \in I \setminus (K+b)$}
      .
    \end{cases}
  \end{equation}
  (For readability,
  we have written $I+b-a$ instead of $(I \cup \{ b \}) \setminus \{ a \}$
  and $K+a$ instead of $K \cup \{ a \}$.)
  These definitions imply that $\tau = \tau_2 \circ \tau_1$.
  Schematically, with dashed arrows indicating the mapping
  $x \mapsto x$ and solid arrows indicating $x \mapsto \tau(x)$:
  \begin{center}
    \begin{tikzpicture}[scale=0.7]
      \draw (0,0) -- +(0,6.5);
      \begin{scope}[line width=3pt]
        \draw (0,0.5) to node[left] {$I'$} +(0,1);
        \draw (0,2.5) to node[left] {$I \cap J$} +(0,2);
      \end{scope}
      \draw (-0.15,2.5) rectangle node[right] {$ $} +(0.3,1);
      \draw[fill=gray] (0,1.3) circle (0.1) node[right] {$a$};

      \begin{scope}[xshift=3cm]
        \draw (0,0) -- +(0,6.5);
        \begin{scope}[line width=3pt]
          \draw (0,0.5) to node[left] {$ $} +(0,1);
          \draw (0,2.5) to node[left] {$ $} +(0,2);
        \end{scope}
        \draw (-0.15,2.5) rectangle +(0.3,1);
        \draw (0.4,2.5) node {$K$};
        \draw[fill=white] (0,1.3) circle (0.1) node[right] {$ $};
        \draw[fill=gray] (0,5.8) circle (0.1) node[left] {$b$};
      \end{scope}

      \begin{scope}[xshift=6cm]
        \draw (0,0) -- +(0,6.5);
        \begin{scope}[line width=3pt]
          \draw (0,5) to node[right] {$J'$} +(0,1);
          \draw (0,2.5) to node[right] {$I \cap J$} +(0,2);
        \end{scope}
        \draw (-0.15,2.5) rectangle node[right] {$ $} +(0.3,1);
        \draw[fill=gray] (0,5.8) circle (0.1) node[left] {$ $};
      \end{scope}

      \begin{scope}[->,dashed,xshift=0.5cm]
        \draw (0,1) to +(2,0);
        \draw (0,4) to +(2,0);
      \end{scope}

      \begin{scope}[xshift=0.5cm,yshift=2.5cm]
        \begin{scope}[->]
          \draw (0,-1.2) -- (2,0.2);
          \draw (0,0.2) -- (2,0.5);
          \draw (0,0.5) -- (2,0.8);
          \draw (0,0.8) -- (2,3.1);
        \end{scope}
      \end{scope}
      \draw (1.5,-0.5) node {$\tau_1$};

      \begin{scope}[xshift=3.5cm]
        \begin{scope}[->]
          \draw[dashed] (0,3) to +(2,0);
          \draw (0,4.2) to (2,5.2);
          \draw (0,4) -- (2,4.1);
          \draw (0,1) to (2,3.8);
          \draw[dashed] (0,5.8) -- +(2,0);
        \end{scope}
        \draw (1,-0.5) node {$\tau_2$};
      \end{scope}
    \end{tikzpicture}
  \end{center}
  The result of flipping the cluster~$C$ is
  $f_{ij} \smallbullet \tau = \tau_3 \circ \tau_1^{-1}$,
  where $\tau_3 \colon I \to J+a-b$ differs from $\tau_2$
  in having the edge $a \to a$ instead of the edge $b \to b$:
  \begin{center}
    \begin{tikzpicture}[scale=0.7]
      \draw (0,0) -- +(0,6.5);
      \begin{scope}[line width=3pt]
        \draw (0,0.5) to node[left] {$I'-a$} +(0,1);
        \draw (0,2.5) to node[left] {$I \cap J$} +(0,2);
      \end{scope}
      \draw (-0.15,2.5) rectangle node[right] {$ $} +(0.3,1);
      \draw[fill=white] (0,1.3) circle (0.1) node[right] {$ $};
      \draw[fill=gray] (0,5.8) circle (0.1) node[left] {$b$};

      \begin{scope}[xshift=3cm]
        \draw (0,0) -- +(0,6.5);
        \begin{scope}[line width=3pt]
          \draw (0,0.5) to node[left] {$ $} +(0,1);
          \draw (0,2.5) to node[left] {$ $} +(0,2);
        \end{scope}
        \draw (-0.15,2.5) rectangle +(0.3,1);
        \draw (0.4,2.5) node {$K$};
        \draw[fill=gray] (0,1.3) circle (0.1) node[right] {$ $};
      \end{scope}

      \begin{scope}[xshift=6cm]
        \draw (0,0) -- +(0,6.5);
        \begin{scope}[line width=3pt]
          \draw (0,5) to node[right] {$J'-b$} +(0,1);
          \draw (0,2.5) to node[right] {$I \cap J$} +(0,2);
        \end{scope}
        \draw (-0.15,2.5) rectangle node[right] {$ $} +(0.3,1);
        \draw[fill=white] (0,5.8) circle (0.1) node[left] {$ $};
        \draw[fill=gray] (0,1.3) circle (0.1) node[right] {$a$};
      \end{scope}

      \begin{scope}[->,dashed,xshift=0.5cm]
        \draw (0,1) to +(2,0);
        \draw (0,4) to +(2,0);
      \end{scope}

      \begin{scope}[xshift=0.5cm,yshift=2.5cm]
        \begin{scope}[->]
          \draw (0,0.2) -- (2,-1.2);
          \draw (0,0.5) -- (2,0.2);
          \draw (0,0.8) -- (2,0.5);
          \draw (0,3.1) -- (2,0.8);
        \end{scope}
      \end{scope}
      \draw (1.5,-0.5) node {$\tau_1^{-1}$};

      \begin{scope}[xshift=3.5cm]
        \begin{scope}[->]
          \draw[dashed] (0,3) to +(2,0);
          \draw (0,4.2) to (2,5.2);
          \draw (0,4) -- (2,4.1);
          \draw (0,1) to (2,3.8);
          \draw[dashed] (0,1.3) -- +(2,0);
        \end{scope}
        \draw (1,-0.5) node {$\tau_3$};
      \end{scope}
    \end{tikzpicture}
  \end{center}
  We have $\sgn(\tau_3) = (-1)^{\separation{C}} \sgn(\tau_2)$,
  since if we imagine detaching the edge $b \to b$
  from its vertices and continuously sliding it to the position
  $a \to a$,
  the crossing number of the matching changes by one each time either
  the left or the right end of of that edge moves past a matched vertex,
  and by the definition of endpoint
  separation there are exactly $\separation{C}$ matched vertices on the levels
  strictly between $a$ and~$b$.
  Together with $\sgn(\tau_1^{-1}) = \sgn(\tau_1)$,
  this proves \eqref{eq:clusterflip}.
\end{proof}

Let us call a matching $\tau \colon I \to J$ interlacing
if the sets $I$ and~$J$ are interlacing, $I \le J$ (cf. \eqref{eq:interlacing}).
The orbits of the interlacing matchings
(under the action of the flip group~$\mathcal{G}$)
will be called interlacing orbits,
and all other orbits non-interlacing.

\begin{lemma}
  \label{lem:even-endpoint-separation}
  A matching $\sigma$ belongs to an interlacing orbit
  if and only if
  every cluster~$C$ in~$\sigma$ has \textbf{even} endpoint separation $\separation{C}$.
\end{lemma}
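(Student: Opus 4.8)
The plan is to isolate the $\mathcal{G}$-invariant data carried by a matching and then translate the interlacing condition into a statement about how the endpoints of the open clusters are distributed inside the set $I'\cup J'$.

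The first step is the bookkeeping for a single flip. If $f_{ij}$ flips the open cluster $C$ of $\tau\colon I\to J$, with endpoints $a\in I'$ and $b\in J'$, then $f_{ij}\smallbullet\tau$ has index sets $I+b-a$ and $J+a-b$ (the matchings $\tau_1,\tau_3$ in the proof of Lemma~\ref{lem:sign-when-flipped} make this explicit). Hence $I\cap J$ and $I\cup J$ are unchanged, and at the level of clusters the flip merely swaps the two endpoint labels $a,b$ of $C$ between the two sides, leaving every other cluster — in particular every closed cluster — untouched. It follows that the set $I\cap J$, the set $E:=I'\cup J'$, the decomposition of $E$ into the unordered endpoint pairs of the open clusters, and the number $\separation{C}$ of each cluster $C$ (which by definition depends only on the sorted list $I\cup J$ with multiplicities and on the labels of the endpoints of $C$) are all constant along a $\mathcal{G}$-orbit. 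In particular ``every cluster of $\sigma$ has even endpoint separation'' is an orbit-invariant property. I also record the elementary parity formula $\separation{C}\equiv\abs{\{\,x\in E:a<x<b\,\}}\pmod 2$, valid when $a<b$ are the endpoint labels of $C$, because each element of $I\cap J$ lying strictly between $a$ and $b$ contributes $2$ to $\separation{C}$.

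For the direction ``$\Rightarrow$'', since the property in question is orbit-invariant it suffices to check it for one representative of an interlacing orbit, namely an interlacing matching $\tau\colon I\to J$ with $I\le J$. Writing $E=\{e_1<e_2<\dots<e_{2p}\}$, the strict interlacing \eqref{eq:strictly-interlacing} says $I'=\{e_1,e_3,\dots,e_{2p-1}\}$ and $J'=\{e_2,e_4,\dots,e_{2p}\}$. Each open cluster of $\tau$ has exactly one endpoint in $I'$ and one in $J'$ (its component in $\tilde{\tau}$ is a path that alternates matching edges and auxiliary edges and begins and ends with a matching edge, hence has odd length, so its two ends lie on opposite sides), so its two endpoint labels are $e_r$ and $e_s$ with $r,s$ of opposite parity; by the parity formula $\separation{C}$ is even, as needed.

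For the direction ``$\Leftarrow$'', assume every open cluster of $\sigma$ has even endpoint separation (the closed ones have $\separation{C}=0$ automatically, and their vertices lie in $I\cap J$, so they contribute nothing to $E$). By the parity formula, for each open cluster the two endpoint labels occupy positions of opposite parity in the sorted list $e_1<\dots<e_{2p}$; since the open clusters partition $E$ into $p$ such pairs, each pair contains exactly one odd-position and one even-position element of $E$. Now flip exactly those open clusters whose current $I'$-endpoint is the even-position element of its pair — any prescribed open cluster can be flipped by a suitable generator $f_{ij}$ (take $f_{ij}$ with $\{i,j\}$ the endpoints of any edge of that cluster), and such flips act independently on distinct clusters, so a single $g\in\mathcal{G}$ realizes the combined flip. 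The resulting matching $\sigma'=g\smallbullet\sigma$ has $I'_{\sigma'}=\{e_1,e_3,\dots,e_{2p-1}\}$ and $J'_{\sigma'}=\{e_2,e_4,\dots,e_{2p}\}$, which are strictly interlacing; thus $\sigma'$ is interlacing and $\sigma$ belongs to an interlacing orbit.

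The step I expect to be the main obstacle is making the first paragraph airtight: one must verify that a flip sets up a canonical bijection between the clusters of $\tau$ and of $f_{ij}\smallbullet\tau$ preserving endpoint labels (so that ``the clusters along an orbit'' and the invariance of endpoint separation are well defined), and that no flip ever disturbs a closed cluster. Once that is pinned down, both implications follow immediately from the parity formula.
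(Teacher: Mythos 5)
Your proof is correct and follows essentially the same route as the paper's: you establish orbit-invariance of the cluster decomposition and endpoint separations, verify the forward implication on an interlacing representative via a parity count of $E = I'\cup J'$ (the paper computes $\separation{C}=2(t-s)$ directly, which is the same count), and for the converse you recover the unique candidate interlacing sets by sorting and flip the clusters whose endpoints land in $J\times I$. The only cosmetic difference is that you isolate the mod-$2$ reduction $\separation{C}\equiv\card\{x\in E : a<x<b\}\pmod 2$ as an explicit lemma, which lets the two directions be handled by the same observation; the underlying mechanism is identical.
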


\begin{proof}
  It is clear from the definitions that flipping a cluster does not
  change its endpoint separation.
  Closed clusters always have even endpoint separation by
  definition (namely, zero).
  If $C$ is an open cluster in an interlacing matching~$\tau \colon I \to J$,
  with endpoints $i_s \in I' = I \setminus (I \cap J)$ and $j_t \in J' =J\setminus (I \cap J)$,
  then in the case $i_s < j_t$ we find from the interlacing condition that
  \begin{equation*}
    \dotsb \leq j_{s-1} < i_{s} < \underbrace{j_{s} \leq \dotsb \leq i_{t}}_{\text{$2(t-s)$ elements}} < j_{t} < i_{t+1} \leq \dotsb 
    ,
  \end{equation*}
  so that $\separation{C} = 2(t-s)$ is even, and the case $i_s > j_t$ is similar.
  It follows that if $\sigma$ belongs to the orbit of an interlacing matching,
  then every cluster in~$\sigma$ has even endpoint separation~$\separation{C}$.

  Conversely, given a matching $\sigma \colon A \to B$ all of whose clusters
  have even endpoint separation,
  we form interlacing sets $I \le J$ by sorting the list of numbers
  $(a_1,\dots,a_k,b_1,\dots,b_k)$ in ascending order and labelling
  the elements of the sorted list as $(i_1, j_1, i_2, j_2, \dots, i_k, j_k)$
  (note that $I \cap J = A \cap B$).
  If $a \in A$ and $b \in B$ are the endpoints of an open cluster $C$ in~$\sigma$,
  then $(a,b) \in I \times J$ or $J \times I$ (rather than $I \times I$ or $J \times J$)
  because of the assumption that $\separation{C}$ is even.
  Flipping those open clusters in $\sigma$ whose endpoints belong to $J \times I$ produces
  an interlacing matching $\tau \colon I \to J$ whose orbit $\sigma$ belongs to.
\end{proof}

\begin{corollary}
  \label{cor:interlacing}
  All matchings in a given interlacing orbit have the same sign.
\end{corollary}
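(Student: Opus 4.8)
The plan is to read the statement off directly from Lemma~\ref{lem:sign-when-flipped} and Lemma~\ref{lem:even-endpoint-separation}. Take two matchings $\tau$ and $\tau'$ lying in the same interlacing orbit. By definition of an orbit there is a group element $g \in \mathcal{G}$ with $\tau' = g \smallbullet \tau$, and since the flips $\{f_{ij}\}_{i<j}$ generate $\mathcal{G}$ we may write $g = f_{i_\ell j_\ell} \cdots f_{i_1 j_1}$ and build $\tau'$ from $\tau$ one generator at a time, passing through intermediate matchings $\tau = \tau^{(0)}, \tau^{(1)}, \dots, \tau^{(\ell)} = \tau'$ with $\tau^{(m)} = f_{i_m j_m} \smallbullet \tau^{(m-1)}$. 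It then suffices to show $\sgn(\tau^{(m)}) = \sgn(\tau^{(m-1)})$ for each $m$.

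First I would note that every $\tau^{(m)}$ still lies in the same orbit, since applying a group element cannot leave the orbit; hence each $\tau^{(m)}$ is an interlacing matching, and Lemma~\ref{lem:even-endpoint-separation} tells us that every cluster of every $\tau^{(m)}$ has \emph{even} endpoint separation. Now examine the passage from $\tau^{(m-1)}$ to $\tau^{(m)}$. Either $f_{i_m j_m}$ acts trivially, in which case the sign is visibly unchanged; or it flips some cluster $C$ of $\tau^{(m-1)}$, and then $\separation{C}$ is even by the previous sentence, so Lemma~\ref{lem:sign-when-flipped} gives $\sgn(\tau^{(m)}) = (-1)^{\separation{C}}\sgn(\tau^{(m-1)}) = \sgn(\tau^{(m-1)})$. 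Chaining these equalities over $m = 1,\dots,\ell$ yields $\sgn(\tau') = \sgn(\tau)$, which is the claim.

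I do not expect any real obstacle here; the corollary is essentially a formal consequence of the two preceding lemmas. The only point requiring a moment's care is the bookkeeping observation that each intermediate matching is again interlacing, so that Lemma~\ref{lem:even-endpoint-separation} may be invoked at every step — but this is immediate because the group action permutes matchings within a fixed orbit. (Equivalently, one can phrase the whole argument without intermediate matchings: on an interlacing orbit each generator $f_{ij}$ either fixes a matching or flips a cluster of even endpoint separation, and in either case preserves the sign, so $\sgn$ is constant along the orbit.)
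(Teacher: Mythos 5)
Your argument is exactly what the paper intends by ``combine Lemmas~\ref{lem:sign-when-flipped} and~\ref{lem:even-endpoint-separation}'': decompose the group element into generators and observe that each flip touches only a cluster of even endpoint separation, hence preserves the sign. One small terminological slip worth noting: the intermediate matchings $\tau^{(m)}$ are not themselves ``interlacing matchings'' (Lemma~\ref{lem:orbit-cardinality}(b) shows each interlacing orbit contains exactly \emph{one} interlacing matching); what you need, and what Lemma~\ref{lem:even-endpoint-separation} actually supplies, is merely that each $\tau^{(m)}$ lies in an interlacing \emph{orbit} --- your concluding parenthetical already has this phrasing right, and the argument is sound.
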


\begin{proof}
  Combine Lemmas \ref{lem:sign-when-flipped} and~\ref{lem:even-endpoint-separation}.
\end{proof}

\begin{corollary}
  \label{cor:non-interlacing}
  In a non-interlacing orbit, there are equally many matchings of each sign.
\end{corollary}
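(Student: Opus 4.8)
The plan is to exhibit a fixed-point-free involution on any non-interlacing orbit that reverses the sign. By Lemma~\ref{lem:even-endpoint-separation}, a non-interlacing orbit contains some matching $\sigma$ having at least one cluster $C_0$ with \emph{odd} endpoint separation $\separation{C_0}$. Since flipping a cluster does not change its endpoint separation (as observed in the proof of Lemma~\ref{lem:even-endpoint-separation}), \emph{every} matching in the orbit has a cluster with odd endpoint separation; the trouble is that which cluster this is, and which edges it contains, varies across the orbit, so we cannot simply fix one generator $f_{ij}$ and let it act. First I would pin down a canonical choice: given a matching $\sigma$ in the orbit, let $C(\sigma)$ be the open cluster of odd endpoint separation whose endpoint set $\{a,b\}$ is lexicographically smallest (say, compare the pairs $(\min(a,b),\max(a,b))$). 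Because flipping preserves the partition into clusters and their endpoint-sets, every matching in the orbit has the \emph{same} collection of cluster-endpoint-sets, so $C(\sigma)$ singles out the same pair of endpoints $\{a,b\}$ for every $\sigma$ in the orbit; only the internal orientation of that cluster (and of the others) differs.

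Now define $\Phi$ on the orbit by flipping the cluster with endpoint-set $\{a,b\}$. Concretely, if the edges of that cluster currently contain $i\to j$ (with $i<j$, say), apply $f_{ij}$; this flips exactly that cluster, by the definition of the group action, since the cluster is open. The map $\Phi$ is an involution because flipping the same cluster twice is the identity, and it has no fixed points since a flip always changes at least one edge. By Lemma~\ref{lem:sign-when-flipped}, $\sgn(\Phi(\sigma)) = (-1)^{\separation{C(\sigma)}}\sgn(\sigma) = -\sgn(\sigma)$, because $\separation{C(\sigma)}$ is odd. Thus $\Phi$ pairs up the matchings of the orbit into sign-reversing pairs, so there are equally many of each sign.

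The main obstacle is the bookkeeping in the previous paragraph — checking that the distinguished cluster $C(\sigma)$ is genuinely well-defined and orbit-invariant, i.e., that the group action permutes only the internal orientations of clusters while leaving the set of clusters (as vertex-subsets) and in particular their endpoint pairs untouched. This is essentially contained in the discussion of clusters and the definition of the flip action, but it should be stated cleanly: a flip $f_{ij}$ either does nothing or replaces an open cluster $C$ by its reversal, which has the same vertex set and the same two endpoints; hence the orbit of $\sigma$ consists of matchings all sharing the same decomposition into cluster vertex-sets, with the open clusters independently reversible. Once that is in hand, the tie-breaking rule and the involution $\Phi$ go through with no further difficulty, completing the proof.
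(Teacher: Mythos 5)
Your proof is correct and follows essentially the same route as the paper's: a sign-reversing fixed-point-free involution obtained by flipping a canonically chosen open cluster of odd endpoint separation, whose well-definedness on the whole orbit rests on the observation that flipping preserves the cluster decomposition and the endpoint separations. The only difference is cosmetic — the paper singles out the odd-separation cluster whose lowest-numbered node is smallest, while you use the lexicographically smallest endpoint pair; both tie-breaking rules are orbit-invariant and equally valid.
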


\begin{proof}
  By Lemma~\ref{lem:even-endpoint-separation}, each matching in a non-interlacing
  orbit has at least one open cluster with odd endpoint separation;
  among those clusters we single out the unique one with the property that its
  lowest-numbered node is smaller than that of the other ones.
  The operation of flipping that cluster is a sign-reversing
  involution pairing up the matchings in the orbit.
  (It's sign-reversing by Lemma~\ref{lem:sign-when-flipped},
  and it's an involution since the lowest-numbered node will still be
  the lowest-numbered node after the flip.)
\end{proof}

\begin{lemma}
  \label{lem:orbit-cardinality}
  (a)~The orbit $\mathcal{G} \smallbullet \tau$ of a matching $\tau \colon I \to J$ contains $2^{p(I,J)}$ elements.
  (b)~Each interlacing orbit contains exacly one interlacing matching.
\end{lemma}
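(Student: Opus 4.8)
The plan is to describe the orbit of a matching $\tau \colon I \to J$ completely in terms of its open clusters, and then, for part~(b), to read off which member of an interlacing orbit is the interlacing one. Start with part~(a) by counting the open clusters of $\tau$. In the companion graph $\tilde\tau$ every vertex has degree at most two, so $\tilde\tau$ is a disjoint union of paths and cycles, and the open clusters are the remnants of the path components. A left vertex labelled $r$ meets a real edge iff $r \in I$ and an auxiliary loop iff $r \in I \cap J$, so it has degree one precisely when $r \in I \setminus J = I'$; likewise a right vertex has degree one precisely when $r \in J \setminus I = J'$. Hence $\tilde\tau$ has exactly $\card(I') + \card(J') = 2p(I,J)$ vertices of degree one, a path component accounts for two of them and a cycle for none, so $\tau$ has exactly $p := p(I,J)$ open clusters $C_1,\dots,C_p$. (Since $\tilde\tau$ is bipartite between left and right vertices and no vertex meets more than one real edge or more than one auxiliary loop, the edges along any path alternate between the two types; the unique edge at an endpoint of an open path must be real, so an open cluster has an odd number of edges, which accounts for the fact, used in Lemma~\ref{lem:sign-when-flipped}, that one of its endpoints lies in $I'$ and one in $J'$.)

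For each $\ell$ pick an edge $i \to j$ of $C_\ell$ — necessarily a non-loop — so that $f_{ij}$ flips $C_\ell$; the $p$ operations ``flip $C_\ell$'' commute and are involutions, so for $S \subseteq \{1,\dots,p\}$ we obtain a well-defined matching $\tau_S$ (flip the clusters indexed by $S$). These $2^p$ matchings are distinct: restricted to the vertex set of $C_\ell$, the matching $\tau_S$ consists of the edges of $C_\ell$ or of their reversals according as $\ell \notin S$ or $\ell \in S$, and the two options differ since $C_\ell$ has a non-loop edge. The point that needs an argument, and which I expect to be the \emph{main obstacle}, is that $\mathcal{G} \smallbullet \tau = \{\tau_S : S \subseteq \{1,\dots,p\}\}$, i.e.\ that a single generator never leads outside this set. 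For this, observe that for distinct labels $i,j$ the property ``$\tau$ contains an edge between $i$ and $j$ lying in an \emph{open} cluster'', together with the identity of that cluster, is unaffected by flipping any of the $C_\ell$: flipping leaves closed clusters and their edges untouched; flipping $C_\ell$ turns an edge between $i$ and $j$ in $C_\ell$ into its reversal, still an edge between $i$ and $j$ in the flipped $C_\ell$; and no edge between $i$ and $j$ can appear where there was none. Hence $f_{ij}$ carries each $\tau_S$ to $\tau_{S \triangle \{\ell\}}$, when the edge between $i$ and $j$ in $\tau$ lies in $C_\ell$, and fixes it otherwise, so the orbit is closed and equals $\{\tau_S\}$; therefore $\card(\mathcal{G} \smallbullet \tau) = 2^p = 2^{p(I,J)}$.

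For part~(b), existence of an interlacing matching in each interlacing orbit is already established inside the proof of Lemma~\ref{lem:even-endpoint-separation}, so only uniqueness remains. Let $\tau$ and $\tau'$ be interlacing matchings in one orbit. Since $\tau'$ lies in $\mathcal{G}\smallbullet\tau = \{\tau_S\}$, it is $\tau$ with the open clusters in some subset $S$ flipped. The set $R = I \cap J$ is common to both, and so is the $2p$-element set $U = I' \cup J'$ of labels of open-cluster endpoints, because flipping a cluster merely interchanges, within its fixed endpoint pair $\{a_\ell,b_\ell\}$, which label belongs to $I'$ and which to $J'$. Now interlacing of $\tau$ means that $I'$ and $J'$ strictly interlace, which forces $I'$ to be the set of odd-position elements of $U$ listed in increasing order; the same holds for $\tau'$, so $\tau$ and $\tau'$ have the same $I'$, hence the same $I$ and $J$. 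But passing from $\tau$ to $\tau'$ replaces $I'$ by $(I' \setminus \{a_\ell : \ell \in S\}) \cup \{b_\ell : \ell \in S\}$, and since all $2p$ labels in $U$ are distinct, equality of the two copies of $I'$ forces $S = \emptyset$, so $\tau' = \tau$.
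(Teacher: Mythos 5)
Your proof is correct and follows essentially the same approach as the paper's: count the open clusters as $p(I,J)$, argue that the $p$ clusters can be flipped independently to give $2^p$ distinct matchings, and derive uniqueness in part~(b) from the invariance of $I\cup J$ and $I\cap J$ under flipping together with the fact that interlacing sets are determined by their union and intersection. You spell out two points the paper treats informally — the degree-one count in $\tilde\tau$ showing there are exactly $p$ open clusters, and the verification that the orbit is closed under the generators $f_{ij}$ — but the underlying argument is the same.
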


\begin{proof}
  Recall that $p(I,J) = \card(I') = \card(J')$ where $I' = I \setminus
  (I \cap J)$ and $J' = J \setminus (I \cap J)$. Each element of $I'$
  is the endpoint of an open cluster whose other endpoint belongs to
  $J'$ (by the definition of an open cluster), while all other
  clusters are closed. Thus there are exactly as many open clusters
  in~$\tau$ as there are elements in~$I'$ and~$J'$, and each of these
  $p(I,J)$ open clusters can be flipped independently of the others.
  This proves part~(a). To prove part (b) we consider two cases. If an
  interlacing matching $\tau$ has only closed clusters then by (a)
  $\card(\mathcal{G} \smallbullet \tau)=1$, hence (b) holds. If, on
  the other hand, an interlacing matching $\tau$ has at least one open
  cluster then flipping any open cluster destroys the interlacing
  property. This follows from the fact that $I \cup J$
  and $I \cap J$ are invariant under flipping, and that there is only
  one way of constructing interlacing sets with given union and
  intersection.
\end{proof}

Now we only have to put the pieces together:

\begin{proof}[Proof of part (b) of Theorem~\ref{thm:CDT-common-sum}]
  Let $\mathcal{M}^* \subset \mathcal{M}$ denote the union of the
  interlacing orbits. Then, referring in brackets to the relevant
  Lemmas and Corollaries above, we have
  \begin{align*}
    \sum_{I, J} \minor{X_{IJ}}
    &= \sum_{I, J} \sum_{\tau \colon I \to J} \sgn(\tau) \detterm{X}{\tau}
    && \text{(def. of determinant)} \\
    &= \sum_{\tau \in \mathcal{M}} \sgn(\tau) \detterm{X}{\tau}
    && \\
    &= \sum_{\tau \in \mathcal{M}^*} \sgn(\tau) \detterm{X}{\tau} + \sum_{\tau \notin \mathcal{M}^*} \sgn(\tau) \detterm{X}{\tau}
    && \\
    &= \sum_{I \le J}  \sum_{\tau \colon I \to J} \sum_{\sigma \in \mathcal{G} \smallbullet \tau} \sgn(\sigma) \detterm{X}{\sigma}
    && \text{(def. of $\mathcal{M}^*$, \ref{lem:orbit-cardinality}b)} \\
    & \quad + 0
    && \text{(\ref{lem:weight-when-flipped}, \ref{cor:non-interlacing})}
    \\[1ex]
    &= \sum_{I \le J}  \sum_{\tau \colon I \to J} 2^{p(I,J)} \sgn(\tau) \detterm{X}{\tau}
    && \text{(\ref{lem:weight-when-flipped}, \ref{cor:interlacing}, \ref{lem:orbit-cardinality}a)} \\
    &= \sum_{I \le J} 2^{p(I,J)} \left( \sum_{\tau \colon I \to J} \sgn(\tau) \detterm{X}{\tau} \right)
    && \\
    &= \sum_{I \le J} 2^{p(I,J)} \minor{X_{IJ}}
    && \text{(def. of determinant)}
    .
  \end{align*}
\end{proof}

\section{Acknowledgments}
This work was supported by the Swedish Research Council
(\textit{Vetenskaps\-r{\aa}det}) [VR 2010-5822 to H. Lundmark]
and the Natural Sciences and Engineering Research Council of Canada
[NSERC USRA 414987 to D. Gomez and NSERC 163953 to J. Szmigielski].

\appendix

\section{Appendix}

\newcommand{\peakonplotleft}{-2.5}
\newcommand{\peakonplotright}{16}
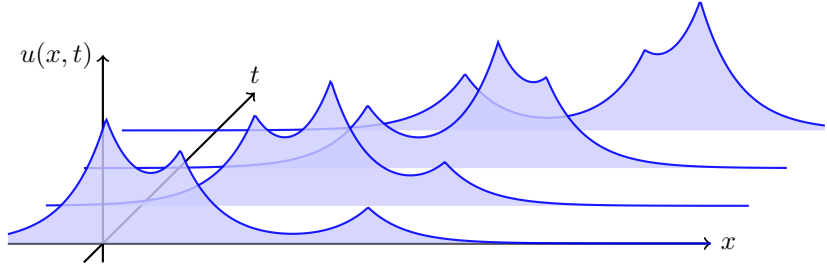
\begin{figure}
  \begin{center}
    \begin{tikzpicture}[samples=500,domain=\peakonplotleft:\peakonplotright,xscale=0.5,yscale=0.5]
      \draw[->,thick] (\peakonplotleft,0) -- (\peakonplotright,0) node[right] {$x$};
      \draw[->,thick] (-0.5,-0.5) -- (4,4) node[above] {$t$};

      \fill[blue!20, opacity=0.8, shift={(3,3)}] (\peakonplotleft,0) -- plot[id=peakonplot3] function{1.48*exp(-abs(x-6.53)) + 1.40*exp(-abs(x-11.26)) + 3.11*exp(-abs(x-12.72))} -- (\peakonplotright,0) -- cycle;
      \draw[blue, opacity=0.9, thick, shift={(3,3)}] plot[id=peakonplot3] function{1.48*exp(-abs(x-6.53)) + 1.40*exp(-abs(x-11.26)) + 3.11*exp(-abs(x-12.72))};

      \fill[blue!20, opacity=0.8, shift={(2,2)}] (\peakonplotleft,0) -- plot[id=peakonplot2] function{1.56*exp(-abs(x-4.97)) + 2.84*exp(-abs(x-8.40)) + 1.60*exp(-abs(x-9.67))} -- (\peakonplotright,0) -- cycle;
      \draw[blue, opacity=0.9, thick, shift={(2,2)}] plot[id=peakonplot2] function{1.56*exp(-abs(x-4.97)) + 2.84*exp(-abs(x-8.40)) + 1.60*exp(-abs(x-9.67))};

      \fill[blue!20, opacity=0.8, shift={(1,1)}] (\peakonplotleft,0) -- plot[id=peakonplot1] function{2*exp(-abs(x-3)) + 3*exp(-abs(x-5)) + exp(-abs(x-8))} -- (\peakonplotright,0) -- cycle;
      \draw[blue, opacity=0.9, thick, shift={(1,1)}] plot[id=peakonplot1] function{2*exp(-abs(x-3)) + 3*exp(-abs(x-5)) + exp(-abs(x-8))};

      \draw[->,thick] (0,-0.5) -- +(0,5.5) node[left] {$u(x,t)$};

      \fill[blue!20, opacity=0.8, shift={(0,0)}] (\peakonplotleft,0) -- plot[id=peakonplot0] function{3.02*exp(-abs(x-0.085)) + 2.04*exp(-abs(x-2.04)) + 0.94*exp(-abs(x-6.98))} -- (\peakonplotright,0) -- cycle;
      \draw[blue, opacity=0.9, thick, shift={(0,0)}] plot[id=peakonplot0] function{3.02*exp(-abs(x-0.085)) + 2.04*exp(-abs(x-2.04)) + 0.94*exp(-abs(x-6.98))};
    \end{tikzpicture}
  \end{center}
  \caption{A three-peakon solution of the Camassa--Holm equation.}
  \label{fig:PeakonExample}
\end{figure}

The Canada Day Theorem first arose in the context of nonlinear partial
differential equations in \cite{hls}. In this appendix we describe briefly the
subject of that paper and how it resulted in the formulation of the
theorem. The nonlinear partial differential equation
\begin{equation}\label{eq:Novikov}
  u_t - u_{xxt} + 4 u^2 u_x = 3 u u_x u_{xx} + u^2 u_{xxx},
\end{equation}
where $u=u(x,t)$, $u_x=\frac{\partial u}{\partial x}(x,t)$,
$u_t=\frac{\partial u}{\partial t}(x,t)$, etc., was derived by V.
Novikov \cite{VNovikov} as part of a classification of generalized
Camassa--Holm-type equations possessing infinite hierarchies of higher
symmetries. The Camassa--Holm equation
\begin{equation}\label{eq:CH}
  u_t - u_{xxt} + 3uu_x = 2u_x u_{xx} + uu_{xxx},
\end{equation}
which was formulated by Camassa and Holm in \cite{ch} as an integrable
shallow water equation (with $u$ denoting a horizontal velocity component),
admits a particularly
interesting class of explicit weak solutions which capture phenomena
such as the collision and breakdown of waves. Miraculously,
many of these features turn out to be intrinsically connected to
classical problems in analysis, like Stieltjes continued fractions
\cite{bss-stieltjes}, or the moment problem \cite{bss-moment}. These
weak explicit solutions are called \emph{peakons} (short for
\emph{peaked solitons}) because of the characteristic $e^{-\abs{x}}$ shape
of the waves.
Multipeakon solutions are formed by the superposition of $n$ peakons,
\begin{equation}\label{eq:PeakonAnsatz}
  u(x,t) = \sum_{i=1}^n m_i(t) \, e^{-\abs{x-x_i(t)}},
\end{equation}
with a suitable time dependence in the amplitudes~$m_i(t)$ and positions~$x_i(t)$.
Observe that since these solutions are not differentiable everywhere they only
satisfy the PDE \eqref{eq:CH} in a certain weak sense. The evolution of a three-peakon
solution over time is illustrated in Figure~\ref{fig:PeakonExample}.
One of the objectives of \cite{hls} was to find explicit formulas
for the positions and amplitudes of the peaks for the multipeakon solution of
Novikov's equation~\eqref{eq:Novikov}.
After substituting the ansatz \eqref{eq:PeakonAnsatz} into
\eqref{eq:Novikov}, taking into account the weak nature of the
solutions, one obtains $2n$ ordinary differential equations
which govern the time dependence of the positions and amplitudes:
\begin{equation}\label{eq:NovikovODE}
  \begin{split}
    \dot x_k &
    = \left( \sum_{i=1}^n m_i \, e^{-\abs{x_k-x_i}} \right)^2,
    \\
    \dot m_k &= m_k \, \left( \sum_{i=1}^n m_i \, e^{-\abs{x_k-x_i}} \right)
    \left( \sum_{j=1}^n m_j \, \sgn(x_k-x_j) \, e^{-\abs{x_k-x_j}} \right).
  \end{split}
\end{equation}
(By definition, $\sgn(0) = 0$ here.)
These equations were already stated in
\cite{hone-wang-cubic-nonlinearity} where it was shown that they
constitute a Hamiltonian system, and one of the main results of
\cite{hls} was that they are also Arnol{\cprime}d--Liouville integrable \cite{Arnold},
meaning that there exist $n$ functionally independent and Poisson commuting
constants of motion $H_1, \dots, H_n$.
To display these constants of motion we first define the matrices
\begin{equation}\label{eq:MatrixNotation}
  \begin{split}
    P &= \diag(m_1,\dots,m_n),\\
    E &= (E_{ij})_{i,j=1}^n, \qquad\text{where $E_{ij}=e^{-\abs{x_i-x_j}}$},\\
    T &= (T_{ij})_{i,j=1}^n, \qquad\text{where $T_{ij}=1+\sgn(i-j)$},\\
  \end{split}
\end{equation}
where $P$ and~$E$ depend on the variables appearing in~\eqref{eq:NovikovODE}.
One can then show that for every complex $\lambda$,
the polynomial $A(\lambda)=\det( I - \lambda TPEP)$ is a constant of motion for
equations~\eqref{eq:NovikovODE}, which implies that the coefficient
of $\lambda^{k}$ in the polynomial $A(\lambda)$ is a
constant of motion. It can also be shown that these coefficients
(for $1 \le k \le n$) are in fact Poisson commuting and functionally independent,
thereby providing the desired set of $n$ constants of motion.
By elementary linear algebra, the coefficient of $\lambda^k$
can be computed (up to a sign) as the sum over all $k\times k$
principal minors of the matrix~$TPEP$. However, before this result was found,
direct computations for small values of~$n$ had indicated that the
constants of motion ought to be given by the sums over all $k\times k$
minors of the symmetric matrix~$PEP$. It was the attempt to reconcile these
observations that led to the formulation of Theorem~\ref{thm:CanadaDay},
and as a result, the constants of motion now have the following description:

\begin{theorem}\label{thm:ConstantsOfMotion}
  The Novikov peakon ODEs \eqref{eq:NovikovODE} admit $n$
  constants of motion $H_1,\dots,H_n$,
  where $H_k$ equals the sum of all $k\times k$ minors (principal
  and non-principal) of the $n\times n$ symmetric matrix
  $PEP=(m_i m_j e^{-\abs{x_i-x_j}})_{i,j=1}^n$.
\end{theorem}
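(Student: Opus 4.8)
The plan is to identify the functions $H_k$, up to a harmless sign, with the coefficients of the polynomial
\begin{equation*}
  A(\lambda) = \det\bigl(I - \lambda\, TPEP\bigr),
\end{equation*}
and then invoke Theorem~\ref{thm:CanadaDay}. First I would record the dynamical input already noted above: for every fixed complex $\lambda$ the quantity $A(\lambda)$ is a constant of motion of the system~\eqref{eq:NovikovODE} (this is the isospectral, Lax-pair-type fact; for the present argument it is taken as given). Since $A(\lambda)$ is a polynomial in $\lambda$ whose value is conserved along every solution for all $\lambda$, each of its coefficients is separately a constant of motion.

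Next I would expand $A(\lambda)$ in powers of $\lambda$ by the elementary linear algebra identity $\det(I-\lambda M)=\sum_{k=0}^{n}(-\lambda)^k\sigma_k(M)$, where $\sigma_k(M)$ denotes the sum of the $k\times k$ principal minors of $M$ (equivalently, the $k$th elementary symmetric function of the eigenvalues of $M$). Applying this with $M=TPEP$ shows that the coefficient of $\lambda^k$ in $A(\lambda)$ equals $(-1)^k$ times the sum of all $k\times k$ principal minors of $TPEP$, so that this sum of principal minors is itself a constant of motion.

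The key step is then to rewrite that sum of principal minors. Because $P=\diag(m_1,\dots,m_n)$ is diagonal and $E=(e^{-\abs{x_i-x_j}})$ is symmetric, the matrix $X:=PEP$ is symmetric and $TPEP=TX$. Theorem~\ref{thm:CanadaDay} then applies verbatim (with this $X$ and any $1\le k\le n$) and tells us that the sum of the $k\times k$ principal minors of $TX$ equals the sum of \emph{all} $k\times k$ minors, principal and non-principal, of $X=PEP$ — which is exactly the quantity $H_k$ of the statement. Hence the coefficient of $\lambda^k$ in $A(\lambda)$ equals $(-1)^k H_k$, and therefore each $H_k$ is a constant of motion of~\eqref{eq:NovikovODE}.

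The remaining assertions implicit in ``$n$ constants of motion'' — that the $H_k$ are functionally independent and Poisson-commute, yielding Arnol'd--Liouville integrability — are established separately in~\cite{hls}, and I would simply cite them; the genuinely new content here is the closed combinatorial description of the $H_k$ as sums of all minors of $PEP$. The only non-routine ingredient in this chain is the combinatorial identity of Theorem~\ref{thm:CanadaDay} together with the isospectral conservation of $A(\lambda)$; granting those, the main obstacle is merely the bookkeeping of the $(-1)^k$ factors, and everything else is a short deduction.
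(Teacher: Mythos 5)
Your proposal is correct and matches the paper's reasoning: the appendix explains Theorem~\ref{thm:ConstantsOfMotion} exactly this way, taking as given that $A(\lambda)=\det(I-\lambda\,TPEP)$ is conserved, reading off its $\lambda^k$-coefficient as (up to sign) the sum of the principal $k\times k$ minors of $TPEP$, and then invoking Theorem~\ref{thm:CanadaDay} with $X=PEP$ symmetric to replace that by the sum of all $k\times k$ minors of $PEP$. The functional independence and Poisson commutativity are indeed cited from \cite{hls} rather than reproven, just as you indicate.
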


As a final remark, let us mention that explicit expressions for the
minors of $PEP$ can be written down easily with the help of the
Lindstr\"om--Gessel--Viennot lemma; see \cite{hls} for details.

\bibliographystyle{abbrv}
\bibliography{CDT}

\end{document}